\newtheorem{theorem}{Theorem}[section]
\newtheorem{lemma}[theorem]{Lemma}
\newtheorem{coro}[theorem]{Corollary}
\newtheorem{proposition}[theorem]{Proposition}
\newtheorem{claim}{Claim}
\theoremstyle{definition}
\newtheorem{defn}[theorem]{Definition}
\newtheorem{rem}[theorem]{Remark}
\newtheorem{question}[theorem]{Question}
\newcommand{\F}{\mathbb{F}}
\newcommand{\N}{\mathbb{N}}
\newcommand{\Q}{\mathbb{Q}}
\newcommand{\Z}{\mathbb{Z}}
\newcommand{\ccB}{\mathcal{B}}
\newcommand{\ccO}{\mathcal{O}}
\renewcommand{\P}{\mathcal{P}}
\newcommand{\Aut}{\mathrm{Aut}}
\newcommand{\Cay}{\mathrm{Cay}}
\newcommand{\GL}{\mathrm{GL}}
\newcommand{\Inn}{\mathrm{Inn}}
\newcommand{\Isom}{\mathrm{Isom}}
\newcommand{\MCG}{\mathrm{MCG}}
\newcommand{\Out}{\mathrm{Out}}
\DeclareMathOperator{\vcd}{\mathrm{vcd}}
\newcommand{\bdry}{\partial}
\newcommand{\Fn}{\mathbb{F}_n}
\newcommand{\growth}{\lambda}
\newcommand{\lawFree}{\mathcal{L}^{\mathrm{Free}}}
\newcommand{\lawUEG}{\mathcal{L}^{\mathrm{UEG}}}
\newcommand{\lawEither}{\mathcal{L}^*}
\newcommand{\normal}{\triangleleft}
\newcommand{\vphi}{\varphi}
\renewcommand{\bar}{\overline}
\newcommand{\abrackets}[1]{\left\langle #1 \right\rangle}
\title{Extensions of hyperbolic groups have locally uniform exponential growth}
\author{Robert Kropholler}
\author{Rylee Alanza Lyman}
\author{Thomas A. Ng}
\date{}
\begin{document}
	
	\maketitle
	
	%
	%
	
	\begin{abstract}
		We introduce a quantitative characterization of subgroup 
		alternatives modeled on the Tits alternative in terms of group laws and investigate when this property is preserved under extensions.  
		We develop a framework that lets us expand the classes of groups known to have locally uniform exponential growth to include extensions of either word hyperbolic or right-angled Artin groups
		by groups with locally uniform exponential growth.  
		From this, we deduce that the automorphism group of a torsion-free one-ended hyperbolic group has locally uniform exponential growth.
		Our methods also demonstrate that automorphism groups of torsion-free one-ended toral relatively hyperbolic groups and certain right-angled Artin groups satisfy our quantitative subgroup alternative.
	\end{abstract}

	%
	%

	\section{Introduction}


	\subsection{Quantitative subgroup alternatives}
	
	In this paper we study a variation of the Tits alternative,
	distinguishing between subgroups containing a uniformly short free basis,
	and those that satisfy a law.
	Recall that for a group $G$ with finite generating set $S$ and $N \in \N$,
	a (free) subgroup $H \le G$ is \emph{$N$-short (with respect to $S$)} if there exists words of $S$-length
	at most $N$ that generate $H$, 
	and that $G$ contains a \emph{uniformly $N$-short} free subgroup if there exists
	an $N$-short free subgroup with respect to
	every finite generating set of $G$ (see also \cite[Definition~1.1]{GJN}).  This definition motivates us to introduce the following property.
	
	\begin{defn}[Quantitative law alternatives]
		\label{quant_alternatives}
		A group $L$ satisfies the \emph{quantitative free subgroup--law alternative} (resp. \emph{quantitative uniform exponential growth--law alternative}
		when there exists a constant $N = N(L) \in \Z_{> 0}$ 
		and a group law $w_L \in \Fn$ that depends only on the group $L$
		such that any finitely-generated subgroup $H \le L$ either contains a uniformly $N$-short free subgroup (resp. has entropy (see definition (\ref{def: uniform exponential growth}) below) $\growth(H) \geq \frac{\log(3)}{N}$),
		or $H$ satisfies the law $w_L$.
	\end{defn}
	
	The quantitative free subgroup--law alternative quantifies the classical Tits alternative in two ways: first by bounding the word length of the shortest free basis elements that witnesses a non-abelian free subgroup,
	and second by noting that virtually solvable groups of bounded derived length satisfy a law involving nested commutators. 
	We denote by $\lawFree$ the class of groups satisfying the quantitative free subgroup--law alternative and by $\lawUEG$ the class of groups satisfying the quantitative uniform exponential growth alternative.  It is well-known that $\lawFree \subsetneq \lawUEG$. Indeed, free Burnside groups of sufficiently large odd exponent \cite[Theorem~2.7]{Osin:Burnside_UEG} and solvable Baumslag--Solitar groups (see \cite[Lemma~6.3]{GJN} and references therein) lie in $\lawUEG$ but not $\lawFree$.
	Several of our statements apply to either of the above classes in which case we use the notation $\mathcal{L}^*$.  
	It will be convenient to consider $\mathcal{L}^*_N \subset \mathcal{L}^*$; this is the class of subgroups satisfying either quantitative law alternative with fixed constant $N$.
	Our main result expands the class of groups known to lie in $\lawFree$. 
	Combining \Cref{relhyp_Aut:alternative} and \Cref{AutRAAGalternative} below we obtain the following theorem. 
	
	\begin{theorem}
		\label{Aut_Combo_Result}
		Let $K$ be a torsion-free one-ended group that is either hyperbolic relative to free abelian subgroups or a right-angled Artin group whose defining graph contains no separating intersection of links.  
		The automorphism group $\Aut(K)$ satifies the quantitative free-subgroup--law alternative.
	\end{theorem}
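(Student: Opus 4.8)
The plan is to prove \Cref{Aut_Combo_Result} by splitting into the two families and invoking, for each, a result established later in the paper: \Cref{relhyp_Aut:alternative} handles the case where $K$ is hyperbolic relative to free abelian subgroups, and \Cref{AutRAAGalternative} handles the right-angled Artin case. So the proof of the present theorem is a one-line case distinction, and the real content lies in those two statements. I would approach both by the same mechanism: realize $\Aut(K)$ as a (possibly iterated) group extension in which every kernel is word hyperbolic or a right-angled Artin group and every quotient already lies in $\lawFree$, and then apply the paper's extension framework, which promotes such an extension to $\lawFree$ while keeping the witnessing law $w_L$ and the constant $N$ under control.

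For $K$ torsion-free, one-ended and hyperbolic relative to free abelian subgroups, I would start from
\[
1 \longrightarrow \Inn(K) \longrightarrow \Aut(K) \longrightarrow \Out(K) \longrightarrow 1 ,
\]
noting that $\Inn(K) \cong K$ since such a $K$ is centerless (the elementary cases being immediate). The kernel $K$ is relatively hyperbolic, so this feeds into the extension framework once $\Out(K) \in \lawFree$. For the latter I would exploit the canonical abelian JSJ decomposition of $K$ (Guirardel--Levitt): up to finite index, $\Out(K)$ is assembled --- again by extensions --- from the mapping class groups of the surface-type vertices, the $\GL$-type outer automorphism groups of the abelian vertices, the finite outer automorphism groups of the rigid vertices (finite by relative co-Hopfianity and rigidity), and the free abelian group of twists. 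Mapping class groups lie in $\lawFree$ by Mangahas's quantitative Tits alternative, $\GL_m(\Z) \in \lawFree$ by Jordan's bound on the derived length of virtually solvable linear subgroups together with the uniform Tits alternative, and abelian groups satisfy a law; hence $\Out(K) \in \lawFree$.

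For $K = A_\Gamma$ with $\Gamma$ having no separating intersection of links, I would induct on the number of vertices of $\Gamma$ using the Charney--Vogtmann restriction and projection homomorphisms: the no-SIL hypothesis is precisely what makes these maps available and together capture $\Out(A_\Gamma)$ modulo $\GL$-type factors and outer automorphism groups of proper sub-right-angled-Artin groups. After first splitting off the center $\Z^k$ spanned by the cone vertices of $\Gamma$, one again obtains $\Aut(A_\Gamma)$ as an iterated extension with right-angled Artin kernels and $\lawFree$ quotients --- any free building blocks that appear being absorbed by the hyperbolic case --- and the framework applies.

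The main obstacle is not the case split but checking, in each setting, that the hypotheses of the extension framework are met and that the $\lawFree$ data stays uniform along the iteration. One must control how the outer automorphism quotient acts on the hyperbolic or right-angled Artin kernel --- the acylindricity- and conjugacy-length-type bounds the framework requires --- and one must certify that $\Out(K)$, together with each factor produced by the JSJ or by the restriction/projection maps, lies in $\lawFree$ with an explicit constant. In the relatively hyperbolic case the crucial inputs are the rigidity of the rigid JSJ vertex groups and Mangahas's quantitative Tits alternative for mapping class groups; in the right-angled Artin case the no-SIL hypothesis plays the corresponding role, and the effort goes into organizing the induction so that the transvection ($\GL$-type) behavior and the partial-conjugation (right-angled Artin extension) behavior are disentangled cleanly.
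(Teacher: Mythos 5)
Your reduction of the theorem to \Cref{relhyp_Aut:alternative} and \Cref{AutRAAGalternative} is exactly the paper's proof; it really is a one-line case distinction. Of your further sketches of the two supporting results: the relatively hyperbolic case matches the paper's plan (pass from $\Aut(K)$ to $\Out(K)$ via $\Inn(K)\cong K$, place $\Out(K)$ in $\lawFree$ using the Guirardel--Levitt JSJ short exact sequence of \Cref{GuirardelLevitt:one-ended_rel-hyp_exact-sequence} together with \Cref{MCG:quantitative_alternative} and \Cref{Linear:quantitative_alternative}, then run iterated conjugation on the relatively hyperbolic kernel). In the RAAG case the paper does not induct through Charney--Vogtmann restriction and projection maps; it cites Guirardel--Sale \cite[Theorem~2(2)]{GuirardelSale:AutRAAG}, which already packages a finite-index subgroup of $\Out(A_\Gamma)$ as a (virtually nilpotent)-by-$\prod_i SL(n_i,\Z)$ extension when $\Gamma$ is SIL-free, and then passes from $\Out$ to $\Aut$ via \Cref{RAAG_or_RFree_extensions}. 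If you do pursue your inductive route, note that the restriction and projection homomorphisms exist for \emph{all} RAAGs, so the no-SIL hypothesis is not what makes them available; in the paper's argument its role is rather to make the partial-conjugation part of $\Out(A_\Gamma)$ virtually nilpotent. Finally, one consistency flag: as stated, both \Cref{relhyp_Aut:alternative} and \Cref{AutRAAGalternative} assert only the quantitative UEG--law alternative, which is formally weaker than the free-subgroup version claimed by \Cref{Aut_Combo_Result}; their proofs do in fact exhibit short free subgroups in the relevant branch, but you should verify that rather than read it off the statements.
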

	
	\Cref{Aut_Combo_Result} effectivizes work of Charney and Vogtmann \cite[Theorem~5.5]{CharneyVogtmann:AutRAAG_ResFin+TitsAlternative} (homogeneous case) and of Horbez, \cite[Theorem~0.4]{Horbez:AutFreeProduct_TitsAlternative} who prove the Tits alternative for outer automorphism groups of right-angled Artin groups, as well as work of Sela \cite[Theorem~1.9]{Sela}, Levitt \cite[Theorem~1.1]{Levitt} and Guirardel and Levitt \cite[Theorem~1.4]{GuirardelLevitt} whose work implies a non-quantitative law alternative for automorphism groups of one-ended hyperbolic groups and one-ended toral relatively hyperbolic groups. 
	We note that the one-ended hypothesis in \Cref{Aut_Combo_Result} is crucial.  In particular, the following question is still open. 
	
	\begin{question}
		\label{OutFn_law_alternative?}
		Does $\Out(\Fn)$ satisfy a quantitative free subgroup--law alternative?
	\end{question}
	
	Partial progress due to Bering \cite[Theorem~1.2]{Bering} answers \Cref{OutFn_law_alternative?} in the affirmative for linearly growing subgroups.  
	Characterizing which groups satisfy a quantitative law alternative is closely related to understanding which finitely generated groups have uniform exponential growth.
	As a consequence of \Cref{prop:hyperbolicExtension} below, a positive answer to \Cref{OutFn_law_alternative?} would immediately imply that both $\Out(\Fn)$ and also $\Aut(\Fn)$ have locally uniform exponential growth, which we discuss next.


	\subsection{Locally Uniform Exponential Growth}

	The tools we develop to prove \Cref{Aut_Combo_Result} have several applications that expands our understanding of how groups and their subgroups grow.
	A finitely generated group $G$ with generating set $S$ has \emph{exponential growth} if 
	\[ 
	\lim\limits_{n\to\infty} \frac{\ln(B_n^S)}{n}=: \growth_{S}(G) > 0. 
	\]
	Here $B^S_n$ is the size of the ball of radius $n$ about the identity with 
	respect to the word metric associated to the generating set $S$.
	The quantity $\growth_{S}(G)$ is called the \emph{exponential growth rate} of $S$.   
	Let $\mathcal{S}$ denote the collection of finite generating sets for $G$. 
	The group $G$ is said to have {\em uniform exponential growth} if  
	\begin{equation}
	\label{def: uniform exponential growth}
	\inf\limits_{S\in \mathcal{S}}\growth_{S}(G) =: \growth(G)>0.
	\end{equation}
	The quantity $\growth(G)$ is called the \emph{entropy} of $G$.  
	Uniformly $N$-short free subgroups are a certificate for uniform exponential growth.  
	To more precisely understand the subgroup structure of extensions,
	we are interested in quantifying the entropy of a group 
	as well as the entropy of its finitely generated subgroups. 
	The following condition is sometimes called 
	\emph{uniform uniform exponential growth} \cite{Gelander, Mangahas, BreuillardFujiwara}.  
	
	\begin{defn}
		We say that $G$ has \emph{locally uniform exponential growth} 
		if there is a constant $c > 0$
		such that any finitely generated exponentially growing subgroup $H$ of $G$ has entropy $\growth(H)\geq c$.
	\end{defn}
	
	Showing that a group satisfies a quantitative subgroup alternative is one avenue towards showing locally uniform exponential growth.  
	Indeed, it leaves only understanding uniform exponential growth for subgroups that satisfy a particular law.
	An immediate consequence of \Cref{quant_alternatives} is that any group in $\lawEither_N$ for which subgroups that satisfy a law are all virtually nilpotent has locally uniform exponential growth.  
	
	Known examples of groups with locally uniform exponential growth include hyperbolic groups \cite{Koubi, ArzhantsevaLysenok} relatively hyperbolic groups with locally exponentially growing peripherals \cite{Xie} (see also \Cref{relhyp:alternative_LUEG}), groups acting without global fixed points on CAT(0) square complexes \cite{GJN, KarSageev}, 
	the mapping class group of finite type surfaces \cite{Mangahas}, and linear groups \cite{EMO, Breuillard}.
	Each of the above proofs of locally uniform exponential growth proceed by either 
	exhibiting a short free subgroups or free sub-semigroups or arguing that the chosen subgroup is virtually nilpotent.
	We note that non-virtually nilpotent solvable groups are known to have uniform exponential growth \cite{Alperin, Osin:solvable}, but the authors are unaware if these results can be extended to show locally uniform exponential growth.
	The following is a key result needed to prove \Cref{Aut_Combo_Result}. 
	It says that the extension of a hyperbolic group by a group in $\lawEither$ remains in $\lawEither$.
	
	\begin{restatable*}{theorem}{hyperbolicExtension}
		\label{prop:hyperbolicExtension}
		Let $L\in \lawEither_N$ be a group with the associated law $w_L$. 
		Let $H$ be a word-hyperbolic group.
		Let $E$ be a group fitting into a short exact sequence 
		$1\to H\to E \stackrel{\vphi}\to L\to 1$. 
		Then $E$ also satisfies the same quantitative law alternative (possibly with respect to a larger constant and longer law). 
		
		Moreover, if $L$ has locally uniform exponential growth so does $E$.
		The entropy $\growth(G)$ of any finitely generated subgroup $G \leq E$ can be bounded below by a function of $N$, $|w_L|$, 
		the size of a minimal generating set of $H$, and the hyperbolicity constant $\delta$ associated to such a generating set. 
	\end{restatable*}
	
	A group is called \emph{2-free}, if every 2-generated subgroup is free. 
	Examples of 2-free groups include 
	free groups, 
	hyperbolic surface groups, 
	fundamental groups of hyperbolic 3-manifolds with $\operatorname{rank}(H_1(M;\Q)) \geq 3$ \cite[Corollary~1.9]{ShalenWagreich}, 
	and hyperbolic residually free groups \cite{Baumslag:ResiduallyFree}.
	In the special case of hyperbolic 2-free groups we obtain the following striking result. 
	
	\begin{restatable*}{theorem}{twofree}
		\label{thm:twofree}
		Let $F$ be a 2-free word-hyperbolic group and $A$ be an abelian group.
		Suppose that $E$ is a group fitting into a short exact sequence  $1 \to F \to E \to A \to 1$.
		Let $T \subset E$ be a finite generating set for an exponentially growing subgroup. 
		Then there are words of $T$-length at most $6$ that generate a free group. 
	\end{restatable*}
	
	The following corollary was obtained simultaneously and independently by Bregman and Clay \cite[Lemma~5.2]{BregmanClay} in the free-by-cyclic case.
	
	\begin{restatable*}{coro}{freebycyclic}
		\label{cor:free-by-cyclic}
		Let $G$ be a free-by-cyclic or a (hyperbolic surface)-by-cyclic group. 
		Let $T$ be a finite generating set for an exponentially growing subgroup. 
		Then there are words of length $\leq 6$ in $T$ that generate a free subgroup. 
	\end{restatable*}
	
	Important examples of free-by-cyclic and surface-by-cyclic groups 
	include fundamental groups of fibered hyperbolic 3-manifolds. 
	While previous work of Koubi \cite{Koubi} (in the word hyperbolic case) 
	and of Dey, Kapovich, and Liu \cite{DeyKapovichLiu} also produce short free subgroups,
	their bounds on word length are larger 
	and less explicit with dependence on the hyperbolicity constant of the group 
	or the Margulis number of the manifold, neither of which are easily computable from a given group presentation.  
	The proof of \Cref{cor:free-by-cyclic} 
	only relies on free and surface groups being \emph{2-free} and that cyclic groups are abelian.

	The proofs of \Cref{Aut_Combo_Result}, \Cref{prop:hyperbolicExtension}, \Cref{thm:twofree} rely on a common framework.  We work with extensions $E$ fitting into a short exact sequence
	\begin{center}
		\begin{tikzcd}
			1 \arrow[r] & K \arrow[r, "\iota"] & E \arrow[r, "\varphi"] & Q \arrow[r] &   1
		\end{tikzcd}
	\end{center}
	where the quotient, $Q$ satisfies a quantitative law alternative.  
	Exponential growth and free subgroups pull back to the extension group, so it suffices to study subgroups $G$ of $E$ where the image of $G$ in $Q$ satisfies a law.  
	This law applied to a finite generating set $T$ for $G$ gives a finite collection of elements $W \subseteq G \cap \iota(K)$. 
	Conjugation by $T$ gives an action by automorphisms on the kernel $K$.  
	Iterating these automorphisms, we obtain an ascending chain of subgroups of $K$
	\[
	H_0 < H_1 < \cdots < H_j < \cdots
	\]
	where $H_0 = \abrackets{W}$ and $H_i = \abrackets{H_{i - 1}, T(H_{i - 1})}$.
	We proceed by characterizing how non-positive curvature of the group $K$ can be exploited to give an effective bound $B$ such that either $H_B$ has exponential growth bounded below with growth rate depending on $K$ in which case so does $G$. 
	Otherwise, the ascending chain terminates with a subgroup $H_B \normal G$ that satisfies a law, whence $G$ too satisfies a law.  
	
	The remainder of this article is organized as follows.  
	In \Cref{section:lemmas} we review some basic lemmas to understand certain properties that a group may inherit from its quotients.
	In \Cref{section:hyperbolic_extensions} we consider extensions where the kernel is word hyperbolic.  We prove \Cref{thm:twofree} both as a warm-up before proving \Cref{prop:hyperbolicExtension} and also to emphasize the role that being torsion free plays in simplifying our arguments and constants.  
	In \Cref{section:automorphism_groups} we move on to study automorphism groups.  
	We explain why it suffices to only work with finite index subgroups of the outer automorphism group.  These groups frequently contain copies of $\Z^n$ for $n \geq 2$.  To address this, we prove \Cref{thm:strongestTits}, an analog of \Cref{prop:hyperbolicExtension} for extensions of right-angled Artin groups.

	\section{Closure properties of law alternatives}
	\label{section:lemmas}

	In this section we review some elementary facts about inheritance of laws, free subgroups, and exponential growth under quotients.  
	These basic results provide an initial stepping stone for the methods we develop later to characterize when extensions satisfy a quantitative law alternative.  
	Recall that a (group) law is a word $w$ in a free group $\F_r$ that can be evaluated as a function $(\oplus_{i = 1}^r G) \to G$ for any group $G$.  A groups is said to \emph{satisfy a law $w$} when the image of the law contains only the trivial element of $G$.  There are many ways to combine group laws.  One that will be particularly useful to us is composition as functions.  
	
	\begin{defn}[Composition of laws]
		Suppose that $w_K \in \F_n \cong \F(x_1, \dotsc, x_n)$ and $w_Q \in \F_m \cong \F(y_1, \dotsc, y_m)$.
		Define the \emph{composition} of the two laws 
		\[
		w_K \circ w_Q \in \F_{nm} \cong \F(z_1^1, \dotsc, z_m^1, z_1^2 \dotsc, z_m^n)
		\] 
		to be given by  
		\[
		w_K \circ w_Q(z_1^1, \dotsc z_m^n) = w_K(w_Q(z_1^1, \dotsc, z_m^1), \dotsc, w_Q(z_1^n, \dotsc, z_m^n)).
		\]
		
	\end{defn}
	
	The setting that we will use composition will be to combine laws that are satisfied by the kernel and quotient groups of a short exact sequence. 
	In this setting, composition exhibits a law that is satisfied by the extension group.
	
	\begin{lemma}
		\label{law-by-law}
		Let $K$ and $Q$ be groups satisfying laws $w_K$ and $w_Q$ respectively.
		If $E$ is a group that fits into a short exact sequence of the form
		\begin{center}
			\begin{tikzcd}
				1 \arrow[r]
				&   K \arrow[r]
				&   E \arrow[r]
				&   Q \arrow[r]
				&   1
			\end{tikzcd}    
		\end{center}
		then $E$ satisfies the composite law $w_K \circ w_Q$.
	\end{lemma}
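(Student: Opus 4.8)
The plan is to evaluate the composite law $w_K \circ w_Q$ on an arbitrary tuple of elements of $E$ and keep track of where intermediate expressions land with respect to the quotient map. Write $\varphi \colon E \to Q$ for the surjection in the short exact sequence and identify $K$ with its image $\ker\varphi \normal E$. Fix elements $z_j^i \in E$ for $1 \le j \le m$ and $1 \le i \le n$, indexed as in the definition of the composition.

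The one standard fact I would invoke is that evaluation of a group word is natural: for any $u \in \F_r$, any homomorphism $f \colon G \to G'$, and any $g_1, \dotsc, g_r \in G$, one has $f(u(g_1, \dotsc, g_r)) = u(f(g_1), \dotsc, f(g_r))$ (both sides are computed by the same finite product of the $f(g_\ell)^{\pm 1}$). Applying this with $f = \varphi$ to the inner law, $\varphi\bigl(w_Q(z_1^i, \dotsc, z_m^i)\bigr) = w_Q(\varphi(z_1^i), \dotsc, \varphi(z_m^i)) = 1$ in $Q$, since $Q$ satisfies $w_Q$. Hence each element $k_i := w_Q(z_1^i, \dotsc, z_m^i)$ lies in $\ker\varphi = K$.

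Finally I would evaluate the outer law: by definition of the composition, $w_K \circ w_Q(z_1^1, \dotsc, z_m^n) = w_K(k_1, \dotsc, k_n)$, an expression in the elements $k_1, \dotsc, k_n \in K$. Since $K$ is a subgroup it is closed under products and inverses, so this element lies in $K$, and as $K$ satisfies $w_K$ it equals the identity. Because the $z_j^i$ were arbitrary, $E$ satisfies $w_K \circ w_Q$. There is essentially no obstacle here; the only points requiring care are the index bookkeeping in the definition of $w_K \circ w_Q$ and the naturality of word evaluation under homomorphisms, which is precisely what lets us push the inner evaluation into $Q$ and conclude it lands in $K$.
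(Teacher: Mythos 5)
Your argument is correct and is exactly the paper's proof, just written out with more care about naturality of word evaluation and index bookkeeping: the inner $w_Q$-expressions map to $1$ in $Q$, hence lie in $K$, and then applying $w_K$ to elements of $K$ gives the identity. Nothing more to add.
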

	
	\begin{proof}
		Each subword $w_Q(z_1^k \dotsc, z_m^k)$ has trivial image in $Q$, so lies in the image of $K$. Applying $w_K$ to these subwords gives a trivial word in $E$.
	\end{proof}
	
	An immediate consequence of \Cref{law-by-law} is that to show that a group satisfies a law it suffices to work with finite index subgroups.  This is a special case of groups that surject a finite group whose kernel satisfies a law. 
	
	\begin{coro}
		\label{virtual_law}
		Let $G$ be a group.
		Suppose that $\vphi: G \twoheadrightarrow \mathfrak{F}$ is a surjection to a finite group $\mathfrak{F}$. 
		If the kernel satisfies a law $w_{ker}$, then $G$ satisfies a law.  
		
		In particular, if $H$ is a group that satisfies a law $w_H$ and $H \le G$ with index $[G: H] \leq n$, then $G$ satisfies the law $w_H \circ x^{n!}$.   
	\end{coro}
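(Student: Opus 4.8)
The plan is to apply \Cref{law-by-law} twice, once for each assertion. For the first claim, note that $G$ fits into the short exact sequence $1 \to \ker\vphi \to G \to \mathfrak{F} \to 1$, and that the finite group $\mathfrak{F}$ satisfies the law $x^{|\mathfrak{F}|!}$: by Lagrange's theorem every element of $\mathfrak{F}$ has order dividing $|\mathfrak{F}|$, hence dividing $|\mathfrak{F}|!$, so $y^{|\mathfrak{F}|!}$ is trivial for every $y \in \mathfrak{F}$. Since $\ker\vphi$ satisfies $w_{ker}$, \Cref{law-by-law} immediately gives that $G$ satisfies the composite law $w_{ker} \circ x^{|\mathfrak{F}|!}$.

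For the ``in particular'' statement, the first step is to replace $H$ by a finite-index \emph{normal} subgroup of $G$ that still satisfies $w_H$. Let $H_G = \bigcap_{g \in G} g H g^{-1}$ be the normal core of $H$ in $G$. The left translation action of $G$ on the coset space $G/H$ gives a homomorphism $G \to \mathrm{Sym}(G/H)$ whose kernel is exactly $H_G$; as $|G/H| = [G:H] \le n$, this exhibits $H_G \normal G$ with $[G : H_G]$ dividing $[G:H]!$, which divides $n!$. Being a subgroup of $H$, the group $H_G$ satisfies the law $w_H$ (a group law evaluated on a subgroup is a special case of evaluating it on the whole group, so laws pass to subgroups). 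The quotient $G/H_G$ is finite of order dividing $n!$, so every element has order dividing $n!$ and $G/H_G$ satisfies $x^{n!}$. Applying \Cref{law-by-law} to the extension $1 \to H_G \to G \to G/H_G \to 1$ then yields that $G$ satisfies $w_H \circ x^{n!}$, as claimed.

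There is no substantive obstacle here; the argument is entirely elementary once \Cref{law-by-law} is in hand. The only points meriting a little care are the divisibility bookkeeping --- that $|G/H_G|$ \emph{divides} $n!$ rather than merely being bounded by it, which is what licenses the clean exponent in the law $x^{n!}$ --- and the observation that laws are inherited by subgroups, which is immediate from the definition of satisfying a law.
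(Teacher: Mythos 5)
Your proposal is correct and follows essentially the same route as the paper: decompose $G$ as an extension with kernel inside $H$ coming from the coset action on $G/H$, observe that the finite quotient satisfies $x^{n!}$, and apply \Cref{law-by-law}. The only difference is cosmetic --- you name the kernel as the normal core and track that $[G:H_G]$ divides $n!$ rather than merely bounding it, a slightly more careful bookkeeping than the paper's ``image in $S_n$.''
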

	\begin{proof}
		The group $G$ naturally fits into a short exact sequence $1 \to \ker(\vphi) \to G \to \mathfrak{F} \to 1$.  Every finite group satisfies some law by Lagrange's theorem, so by \Cref{law-by-law} $G$ satisfies a law.  
		
		When $H$ is a finite index subgroup of $G$, The group $G$ acts on the set of cosets of $H$.  This gives a homomorphism $G \to S_n$ to the symmetric group on $n$ letters.  The kernel is a subgroup of $H$, so also satisfies $w_H$.  Likewise, the image in $S_n$ will satisfy the law $x^{n!}$. 
		We apply \Cref{law-by-law} to complete the proof. 
	\end{proof}
	
	Laws are not the only thing that are inherited from quotients.  From the other side of the dichotomy in the quantitative law alternatives, we can try to understand free groups and growth from quotients.
	
	\begin{lemma}\label{shortfreeinquotient}
		Suppose that $\varphi\colon G\to H$ is a homomorphism. 
		Let $S$ be a subset of $G$. 
		If $\varphi(S) = \{\varphi(s)\mid s\in S\}$ generates an $N$-short free subgroup, then $S$ generates an $N$-short free subgroup. 
	\end{lemma}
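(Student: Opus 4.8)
The plan is to lift, occurrence by occurrence, a short free basis witnessed in the image back to $G$, and then observe that freeness survives because homomorphisms take relations to relations.

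First I would unpack the hypothesis. Saying that $\varphi(S)$ generates an $N$-short free subgroup means there are elements $u_1,\dots,u_k$ of $\langle\varphi(S)\rangle$ that freely generate a free group of rank $k$ and that are each expressible as a product of at most $N$ letters from $\varphi(S)^{\pm1}$. I would fix such expressions, say $u_i=\varphi(s_{i,1})^{\epsilon_{i,1}}\cdots\varphi(s_{i,\ell_i})^{\epsilon_{i,\ell_i}}$ with $s_{i,j}\in S$, $\epsilon_{i,j}\in\{\pm1\}$ and $\ell_i\le N$; here, for each occurrence of a generator I simply choose some $S$-preimage, so no injectivity of $\varphi$ on $S$ is needed. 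I would then set $w_i:=s_{i,1}^{\epsilon_{i,1}}\cdots s_{i,\ell_i}^{\epsilon_{i,\ell_i}}\in G$, which satisfies $\varphi(w_i)=u_i$ and has $S$-length at most $\ell_i\le N$.

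Next I would verify that $w_1,\dots,w_k$ freely generate a free subgroup of $\langle S\rangle$. Suppose $r\in\F_k$ is a nontrivial reduced word with $r(w_1,\dots,w_k)=1$ in $G$; applying $\varphi$ yields $r(u_1,\dots,u_k)=\varphi\bigl(r(w_1,\dots,w_k)\bigr)=1$ in $H$, contradicting the assumption that $u_1,\dots,u_k$ freely generate a free group. Hence the homomorphism $\F_k\to\langle w_1,\dots,w_k\rangle$ sending the $i$-th generator to $w_i$ is injective, so $\langle w_1,\dots,w_k\rangle$ is a free subgroup of $\langle S\rangle$, and by construction each $w_i$ is a word of $S$-length at most $N$; that is, $S$ generates an $N$-short free subgroup.

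I do not expect a real obstacle: the statement is a soft consequence of the fact that freeness is the absence of relations and is therefore reflected along any homomorphism. The only point needing a little care is the bookkeeping in the first step — $\varphi(S)$ need be neither in bijection with $S$ nor a free basis of $\langle\varphi(S)\rangle$ — but this is dispatched by choosing preimages occurrence by occurrence, which can only shorten words. This lemma is exactly what will let us transport the short-free-subgroup side of a quantitative law alternative from a quotient $Q$ up to an extension $E$.
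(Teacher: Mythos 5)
Your proof is correct and takes essentially the same route as the paper: lift a short basis letter by letter to $G$ and observe that any relation among the lifts would map to a relation in the image, so freeness is reflected. The paper merely reduces first to rank $2$ and invokes the universal property of free groups in place of your explicit ``no nontrivial relation'' argument, which is the same reasoning.
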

	\begin{proof}
		We can assume the free subgroup $K<H$ has rank 2. 
		Let $h_1, h_2$ be elements of $\langle \varphi(S)\rangle$ of length $\leq N$ that generate $K$. 
		Then we can find words $g_1, g_2$ in $\langle S\rangle$ of length $\leq N$ that map onto $h_1$ and $h_2$. 
		By the universal property of free groups we see that $g_1, g_2$ must generate a free group of rank 2 which is $N$-short. 
	\end{proof}
	
	At times we will be interested in groups that have uniform exponential growth but do not necessarily contain a free subgroup.
	In these cases, we can appeal to the following result. 
	
	\begin{lemma}\label{uniformexpgrowthquotient}
		Suppose that $\varphi\colon G\to H$ is a surjection. 
		Suppose that $H$ has uniform exponential growth. 
		Then $G$ has uniform exponential growth. 
	\end{lemma}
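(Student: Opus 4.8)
The plan is to compare balls in $G$ with balls in $H$ generating-set by generating-set. First I would fix an arbitrary finite generating set $S$ of $G$; since $\varphi$ is surjective, $\varphi(S)$ is a finite generating set of $H$. The key observation is that the word metric is non-increasing under the surjection: if $h \in H$ has $\varphi(S)$-length at most $n$, then writing $h$ as a product of at most $n$ elements of $\varphi(S)^{\pm 1}$ and lifting each factor to the corresponding element of $S^{\pm 1}$ produces an element $g \in G$ with $\varphi(g) = h$ and $S$-length at most $n$. Hence the restriction of $\varphi$ to the ball $B_n^S$ in $G$ surjects onto the ball $B_n^{\varphi(S)}$ in $H$, giving $|B_n^S| \ge |B_n^{\varphi(S)}|$ for every $n$.

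Taking logarithms, dividing by $n$, and passing to the limit yields $\growth_S(G) \ge \growth_{\varphi(S)}(H)$. Since $H$ has uniform exponential growth, $\growth_{\varphi(S)}(H) \ge \growth(H) > 0$, so $\growth_S(G) \ge \growth(H)$. As $S$ was an arbitrary finite generating set of $G$, taking the infimum over all such $S$ gives $\growth(G) \ge \growth(H) > 0$, which is exactly the assertion that $G$ has uniform exponential growth.

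There is no real obstacle here: the only point requiring a tiny bit of care is the lifting step, namely that one can realize a geodesic word in $\varphi(S)$ by an $S$-word of the same length — but this is immediate from choosing, once and for all, a preimage in $S^{\pm 1}$ for each element of $\varphi(S)^{\pm 1}$. (One could alternatively phrase the whole argument via \Cref{shortfreeinquotient}-style reasoning, but the direct ball-counting comparison is cleanest and also makes transparent that the entropy bound is inherited with no loss, i.e. $\growth(G) \ge \growth(H)$.)
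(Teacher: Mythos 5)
Your proof is correct and follows essentially the same route as the paper's: fix a finite generating set $S$ of $G$, observe that $\varphi(S)$ generates $H$ and that $\varphi$ maps $B_n^S$ onto $B_n^{\varphi(S)}$, and conclude the growth rate of $G$ dominates that of $H$. You just spell out the ball-counting and limit steps more explicitly than the paper's terse one-line argument.
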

	\begin{proof}
		Let $S$ be a generating set for $G$. 
		Then $\varphi(S) = \{\varphi(s)\mid s\in S\}$ generates $H$. 
		Since the image of the ball of radius $N$ in $\langle S\rangle$ surjects the ball of radius $N$ in $\langle \varphi(S)\rangle$ and the latter grows exponentially, we see that the former does as well. 
	\end{proof}
	
	We finish this section with several propositions showing various elementary examples where quantitative law alternatives are preserved. 
	We start by showing that these are commensurability invariants. 
	\begin{proposition}\label{prop:commensurabilityinvariant}
		Let $G$ be a group with a finite index subgroup $F$. 
		Then $G\in\lawEither$ if and only if $F\in\lawEither$.
	\end{proposition}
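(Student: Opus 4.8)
The plan is to handle the two implications separately: the forward implication ($G\in\lawEither\Rightarrow F\in\lawEither$) is essentially formal, while the reverse one reduces to a Reidemeister--Schreier computation that controls the constant uniformly over all generating sets. For the forward direction, suppose $G\in\lawEither_N$ with associated law $w_G$. Every finitely generated $H\le F$ is in particular a finitely generated subgroup of $G$, and each of the three properties of $H$ appearing in \Cref{quant_alternatives} --- that $H$ contains a uniformly $N$-short free subgroup, that $\growth(H)\ge\frac{\log(3)}{N}$, or that $H$ satisfies $w_G$ --- is intrinsic to the abstract group $H$ and refers to no ambient group. Hence the alternative witnessed for $G$ is witnessed verbatim for $F$, with the same constant $N$ and the same law $w_G$; this works identically for $\lawFree$ and $\lawUEG$, so I may continue to write $\lawEither$.

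For the reverse direction, suppose $F\in\lawEither_N$ with law $w_F$ and set $n=[G:F]$. I would show $G\in\lawEither_{N'}$ with $N'=(2n-1)N$ and law $w_G=w_F\circ x^{n!}$. Let $H\le G$ be finitely generated. The map $h(F\cap H)\mapsto hF$ embeds the $H$-cosets of $F\cap H$ into the $G$-cosets of $F$, so $d:=[H:F\cap H]\le n$; in particular $F\cap H$ is finitely generated, and since $F\cap H\le F$ the alternative for $F$ applies to it. If $F\cap H$ satisfies $w_F$, then, $F\cap H$ having index $\le n$ in $H$, \Cref{virtual_law} shows $H$ satisfies $w_F\circ x^{n!}=w_G$, as needed.

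Otherwise $F\cap H$ contains a uniformly $N$-short free subgroup (resp.\ has entropy $\growth(F\cap H)\ge\frac{\log(3)}{N}$). Let $T$ be an arbitrary finite generating set of $H$. By the Reidemeister--Schreier process, $F\cap H$ admits a finite generating set $T'$ each of whose elements is a word in $T$ of length at most $2d-1\le 2n-1$. In the free case, the uniform $N$-shortness of $F\cap H$ applied to $T'$ yields words of $T'$-length $\le N$, hence words of $T$-length $\le(2n-1)N$, generating a free subgroup of $H$; as $T$ was arbitrary, $H$ contains a uniformly $(2n-1)N$-short free subgroup. In the growth case, since a product of $m$ elements of $T'$ is a product of at most $(2n-1)m$ elements of $T$, the growth rates obey $\growth_{T'}(F\cap H)\le(2n-1)\growth_T(H)$; taking infima over generating sets gives $\growth(H)\ge\frac{1}{2n-1}\growth(F\cap H)\ge\frac{\log(3)}{(2n-1)N}=\frac{\log(3)}{N'}$. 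In either case $H$ obeys the alternative with constant $N'$, so $G\in\lawEither_{N'}$.

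The step I expect to require genuine care --- indeed the only nontrivial point --- is this last one: both ``contains a uniformly $N$-short free subgroup'' and the entropy $\growth(\,\cdot\,)$ are quantified over \emph{every} finite generating set, so one may not simply import a convenient generating set of $F\cap H$; passing through Schreier generators relative to the given generating set of $H$ is precisely what incurs, and simultaneously bounds, the loss of the factor $2n-1$ in the constant.
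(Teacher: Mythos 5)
Your proof is correct and follows the same overall strategy as the paper: the forward direction is immediate because subgroups of $F$ are subgroups of $G$ with the relevant properties intrinsic, and the reverse direction splits into the ``law'' case (handled by \Cref{virtual_law}) and the ``short free / growth'' case (handled by a Schreier generating-set bound). The difference is in the choice of intermediate subgroup: the paper first passes to the normal core $K$ of $F$ in $G$, with $[G:K] \leq n!$, and intersects $H$ with $K$, which forces the Schreier length bound to scale with $n!$ and yields the constant $(2n!+1)N$; the paper then delegates the entropy transfer to Shalen--Wagreich [Corollary 3.6]. You instead work directly with $F\cap H$, whose index in $H$ is at most $n$ (not $n!$), and you carry out the Reidemeister--Schreier estimate yourself in both the free and the entropy cases. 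This avoids the normal core entirely, makes the argument self-contained (no external citation for the growth transfer), and improves the constant to $(2n-1)N$, while giving essentially the same law $w_F \circ x^{n!}$. In short: same skeleton, but your version is a bit sharper and more elementary.
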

	\begin{proof}
		Since any subgroup of $F$ is a subgroup of $G$ the only if direction is clear. 
		
		Now suppose that $F\in\lawEither$. 
		Let $n$ be the index of $F$ in $G$. 
		We can pass to a further finite index subgroup $K$ which is normal in $G$ whose index is bounded by $n!$. 
		Let $H$ be a subgroup of $G$ generated by a set $S$. 
		Then $H\cap K$ is a subgroup of $K$ and has finite index in $H$. 
		Note that, if $H\cap K$ satisfies the law $w$, then we see that $H$ also satisfies a law of length bounded by $n!\cdot |w|$. 
		
		Now suppose that $F\in \lawFree$ and $H$ does not satisfy a law. 
		Then we see that $H\cap K$ is generated by words in $S$ of length bounded by $2n! + 1$. 
		Since $H$ doesn't satisfy a law, we see by \Cref{virtual_law}, that $H\cap K$ does not satisfy a law and hence we can find an $N$-short free subgroup. 
		Thus we obtain a $(2n!+1)N$-short free subgroup of $H$. 
		Hence $G\in\lawFree$. 
		
		Now suppose $F\in \lawUEG$, we can once again assume that $H$ does not satisfy a law. 
		Thus, $H\cap K$ has uniform exponential growth. 
		We can now appeal to \cite[Corollary 3.6]{ShalenWagreich}, to see that $H$ has uniform exponential growth. 
	\end{proof}
	
	There are some easy cases where quantitative law alternatives are preserved under taking extensions and quotients. 
	We give two elementary such results here which will be used later in the paper. 
	
	\begin{proposition}
		\label{law_alternative:abelian_kernel}
		Suppose that $G$ is a group containing a normal subgroup $N$ which satisifes a law. 
		Then $G$ satisfies the quantitative free subgroup--law alternative if and only if $G/N$ satisfies the quantitative free subgroup--law alternative. 
	\end{proposition}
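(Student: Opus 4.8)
The plan is to prove the two implications by transporting generating sets across the quotient map $\varphi\colon G\to G/N$; fix a law $w_N$ satisfied by $N$. For the ``if'' direction I will show that if $G/N\in\lawFree$ with constant $N_0$ and law $w_0$, then $G\in\lawFree$ with the same constant $N_0$ and law $w_N\circ w_0$. For the ``only if'' direction I will show that if $G\in\lawFree$ with constant $N_G$ and law $w_G$, then $G/N\in\lawFree$ with the \emph{same} constant and the \emph{same} law. So the constant is never lost, and the law only grows when passing up to $G$.

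\emph{The ``if'' direction.} Let $H=\langle S\rangle\le G$ be finitely generated and put $\bar H=\varphi(H)=\langle\varphi(S)\rangle$; apply the quantitative free subgroup--law alternative for $G/N$ to $\bar H$. If $\bar H$ satisfies $w_0$, then $H\cap N\normal H$ satisfies $w_N$ as a subgroup of $N$, while $H/(H\cap N)\cong\bar H$ satisfies $w_0$, so \Cref{law-by-law} shows that $H$ satisfies $w_N\circ w_0$. If instead $\bar H$ contains a uniformly $N_0$-short free subgroup, then for \emph{any} finite generating set $S$ of $H$ the set $\varphi(S)$ generates $\bar H$, so $\bar H$ contains a rank-$2$ free subgroup generated by two words in $\varphi(S)$ of length at most $N_0$; lifting these to the corresponding words in $S$ and using the universal property of free groups exactly as in the proof of \Cref{shortfreeinquotient} produces a rank-$2$ free subgroup of $H$ that is $N_0$-short with respect to $S$. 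As $S$ was arbitrary, $H$ contains a uniformly $N_0$-short free subgroup, and $G\in\lawFree$.

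\emph{The ``only if'' direction.} Let $\bar H\le G/N$ be finitely generated; we may assume $\bar H$ does not satisfy $w_G$, and we must exhibit, for every finite generating set $\bar T$ of $\bar H$, a free subgroup generated by words of $\bar T$-length at most $N_G$. Fix $\bar T$, pick preimages $T\subseteq G$ of its elements, and set $H'=\langle T\rangle\le G$; then $H'$ is finitely generated with $\varphi(H')=\bar H$. As $\bar H$ is a quotient of $H'$ that does not satisfy $w_G$, neither does $H'$, so the alternative for $G$ gives $H'$ a uniformly $N_G$-short free subgroup; passing to a rank-$2$ subgroup if necessary, we obtain a rank-$2$ free subgroup $F\le H'$ generated by two words in $T$ of length at most $N_G$. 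The crucial point is that $\varphi$ is injective on $F$: the subgroup $F\cap N$ is normal in $F$ and contained in $N$, hence satisfies $w_N$; but a nontrivial normal subgroup of a non-abelian free group is non-abelian free, hence satisfies no law, so $F\cap N=1$. Therefore $\varphi(F)\cong F$ is free of rank $2$ and is generated by the two images, which are words in $\bar T$ of length at most $N_G$. As $\bar T$ was arbitrary, $\bar H$ contains a uniformly $N_G$-short free subgroup, and $G/N\in\lawFree$.

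Every step here except the ``crucial point'' is a routine transfer of generators together with \Cref{law-by-law} and (the argument of) \Cref{shortfreeinquotient}. The one place where real content enters, and the step I expect to be the main obstacle to get exactly right, is ensuring that the short free subgroup manufactured inside $G$ is not collapsed by $\varphi$; this is precisely where the hypothesis that $N$ satisfies a law is used, via the standard fact that a nontrivial normal subgroup of a free group of rank at least $2$ is again non-abelian.
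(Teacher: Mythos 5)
Your proposal is correct and takes essentially the same approach as the paper: for the ``if'' direction you combine \Cref{law-by-law} with (the argument of) \Cref{shortfreeinquotient}, and for the ``only if'' direction you use the fact that a short rank-two free subgroup $F \le G$ has $F \cap N$ a normal subgroup of $F$ satisfying a law, hence trivial, so $F$ maps isomorphically into $G/N$. Your write-up is a bit more careful than the paper's about which generating set is fixed (you lift $\bar T$ to get a finitely generated $H'$ with $\varphi(H')=\bar H$, whereas the paper phrases things in terms of an unspecified $H$ with image $H/(N\cap H)$), but the underlying argument is identical.
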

	\begin{proof}
		Suppose that $G/N$ satisfies the quantitative free subgroup--law alternative.
		Let $H$ be a subgroup of $G$. 
		Consider the image $H/(N \cap H)$ of $H$ in $G/N$.
		If $H/(N \cap H)$ satisfies a law, then we see that $H$ also satisfies a law by \Cref{law-by-law}. 
		If $H/(N \cap H)$ contains a short free subgroup, then we can lift this to a short free subgroup in $H$ by \Cref{shortfreeinquotient}. 
		
		Now for the other direction suppose that $H/(N \cap H)$ is a subgroup of $G/N$ with $H$ a subgroup of $G$.
		If $H$ satisfies a law, then so does any quotient and hence $H/(N \cap H)$ satisfies a law. 
		Suppose that $H$ contains a short rank two free subgroup $F$. 
		Then that image of $F$ in $H/N$ is exactly $F/(F\cap N)$. 
		However, since  $F\cap N$ is both normal in $F$ and satisfies a law we see that it must be trivial. 
		Thus $H/(N \cap H)$ contains a short free subgroup. 
	\end{proof}
	
	\begin{proposition}
		\label{law_alternative:nilpotentkernel}
		Suppose that $G$ is a group containing a normal subgroup $N$ which is  finitely generated and virtually nilpotent. 
		Then $G$ satisfies the quantitative UEG--law alternative if and only if $G/N$ satisfies the quantitative UEG--law alternative. 
	\end{proposition}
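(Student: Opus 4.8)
The plan is to follow the structure of the proof of \Cref{law_alternative:abelian_kernel}, bearing in mind that the uniform-exponential-growth side of the dichotomy makes one of the two implications substantially harder than the other. Throughout I use that, since $N$ is finitely generated and virtually nilpotent, a finite-index nilpotent subgroup of $N$ satisfies an iterated-commutator law, so by \Cref{virtual_law} the group $N$ itself satisfies a law $w_N$ depending only on $N$.

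For the implication ``$G/N\in\lawUEG\implies G\in\lawUEG$'' I would argue as in \Cref{law_alternative:abelian_kernel}. Given a finitely generated $H\le G$, let $\bar H=HN/N\cong H/(H\cap N)$ be its image in $G/N$. If $\bar H$ satisfies $w_{G/N}$, then $H\cap N\normal H$ satisfies $w_N$, so \Cref{law-by-law} applied to $1\to H\cap N\to H\to\bar H\to 1$ shows $H$ satisfies $w_N\circ w_{G/N}$. If instead $\growth(\bar H)$ is bounded below by the threshold for $G/N$, then, since balls of radius $n$ in $H$ surject balls of radius $n$ in $\bar H$ (exactly as in the proof of \Cref{uniformexpgrowthquotient}), the same lower bound holds for $\growth(H)$. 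Hence $G\in\lawUEG$ with the same constant and law $w_N\circ w_{G/N}$.

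The reverse implication is where the work lies. First I would reduce to the case that $N$ is free abelian: replacing $N$ by a characteristic finite-index torsion-free nilpotent subgroup (still normal in $G$) and climbing its upper central series one torsion-free finitely generated abelian layer at a time, while absorbing the finitely many finite quotients that appear via \Cref{virtual_law}, it suffices to treat $N\cong\Z^d$ with $d$ fixed. So let $N\cong\Z^d\normal G$, take a finitely generated $\bar H\le G/N$, and set $H=\pi^{-1}(\bar H)$ where $\pi\colon G\to G/N$; this is finitely generated (an extension of $N$ by $\bar H$) with $H/N\cong\bar H$. Apply the alternative for $G$ to $H$: if $H$ satisfies $w_G$, so does its quotient $\bar H$; otherwise $\growth(H)$ is bounded below by the threshold for $G$. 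In this remaining case, conjugation gives a representation $\rho\colon\bar H\to\Aut(N)\cong\GL_d(\Z)$ (well defined on $\bar H$ as $N$ is abelian) whose image $L=\rho(\bar H)$ is a finitely generated linear group in the fixed dimension $d$. By the uniform Tits alternative for linear groups \cite{EMO,Breuillard}, either $\growth(L)$ is bounded below by a constant $c(d)>0$ --- in which case $\bar H$ surjects $L$ and we are done --- or $L$ is virtually nilpotent of complexity bounded in terms of $d$, hence satisfies a law $v_d$ depending only on $d$. In the latter case let $C=\ker\rho=C_{\bar H}(N)\normal\bar H$, so that $\bar H/C\cong L$ satisfies $v_d$, and by \Cref{law-by-law} it remains to control $C$. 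Its preimage $\pi^{-1}(C)\le H$ is a central extension $1\to N\to\pi^{-1}(C)\to C\to 1$, since $C$ centralises $N$. Using that a central extension of a virtually nilpotent group by a finitely generated abelian group is again virtually nilpotent (if $Q$ has class $c$ then $\gamma_{c+1}$ of the extension is central and $\gamma_{c+2}$ is trivial, and one passes to finite index in general), together with Gromov's polynomial-growth theorem, I would split once more: either every finitely generated subgroup of $\pi^{-1}(C)$ satisfies $w_G$ --- so $\pi^{-1}(C)$, and hence its quotient $C$, satisfies $w_G$, and then $\bar H$ satisfies $w_G\circ v_d$ --- or some finitely generated $H'\le\pi^{-1}(C)$ has entropy above the threshold for $G$; being a central extension of $C':=H'/(H'\cap N)$ by a subgroup of $N$, such an $H'$ forces, via distortion estimates for $N$ inside these subgroups, a uniform lower bound on $\growth(C')$ and, tracing through, on $\growth(\bar H)$. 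Assembling the cases yields $G/N\in\lawUEG$.

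The main obstacle is precisely this entropy bookkeeping in the reverse implication: one must rule out that collapsing the polynomial-growth normal subgroup $N$ destroys uniformity of exponential growth, i.e.\ that $\pi^{-1}(\bar H)$ has entropy above the threshold while $\bar H$ neither does nor is virtually nilpotent of complexity controlled by the data of $G$. Isolating this requires combining Gromov's polynomial-growth theorem, the uniform exponential growth of non-virtually-nilpotent linear groups in bounded dimension \cite{EMO,Breuillard}, the virtual nilpotence of central extensions of virtually nilpotent groups by finitely generated abelian groups, and control on the distortion of $N$ inside the relevant subgroups of $G$; making every resulting constant depend only on the rank of $N$ and on the constant and law witnessing $G\in\lawUEG$ is the crux. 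By contrast, the implication ``$G/N\in\lawUEG\implies G\in\lawUEG$'' and all of the law-composition steps are formal consequences of \Cref{law-by-law}, \Cref{virtual_law}, and \Cref{uniformexpgrowthquotient}.
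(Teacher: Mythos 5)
Your ``$G/N\in\lawUEG\implies G\in\lawUEG$'' direction is the same as the paper's: compose laws via \Cref{law-by-law} (using that fg virtually nilpotent groups satisfy a law) and use \Cref{uniformexpgrowthquotient} for the growth side. The direction ``$G\in\lawUEG\implies G/N\in\lawUEG$'' is where you diverge substantially, and here the paper's route is both simpler and more direct. Given $\bar H\le G/N$ with preimage $H$, the paper picks a generating set $S$ of $\bar H$, a generating set $T$ of $N\cap H$, and forms $U=S'\cup T$ for $H$; it then compares ball sizes $B^S_n\cdot B^T_n\geq B^U_n$ and uses that $\langle T\rangle$ has polynomial growth to conclude $\lambda_S(\bar H)\geq\lambda_U(H)$, which is bounded below because $H$ did not satisfy $w_G$. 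This works for \emph{every} generating set $S$ of $\bar H$ uniformly, which is exactly what a lower bound on entropy requires. No reduction to the abelian case, no linear representation, no uniform Tits alternative for $\GL_d(\Z)$.

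Your alternative route has a genuine gap at the end. After passing to $C=\ker\rho$ and the central extension $1\to N\to\pi^{-1}(C)\to C\to 1$, you split on whether some finitely generated $H'\le\pi^{-1}(C)$ has entropy above the threshold, and in that case assert that ``distortion estimates'' yield a uniform lower bound on $\growth(C')$ and hence, ``tracing through,'' on $\growth(\bar H)$. Two problems. First, the passage from a lower bound on $\growth(H')$ to a lower bound on $\growth(C'=H'/(H'\cap N))$ is precisely the difficulty the proposition is about; invoking unspecified distortion estimates does not resolve it (central $\Z$-subgroups can be distorted, e.g.\ in the Heisenberg group), and in fact this is exactly the step the paper's ball-count carries out explicitly. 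Second, and more seriously, entropy is \emph{not} monotone under passing to supergroups: a lower bound on $\growth(C')$ for a single finitely generated subgroup $C'\le\bar H$ gives no control over $\lambda_S(\bar H)$ for an arbitrary generating set $S$ of $\bar H$. The paper sidesteps this by making the comparison for every generating set of $\bar H$ at once. Also, a small point: in your application of the uniform Tits alternative, the non-exponentially-growing case gives $L$ \emph{virtually solvable} of bounded complexity, not virtually nilpotent (e.g.\ $\Z^2\rtimes\Z\le\GL_3(\Z)$); the conclusion that $L$ satisfies a law $v_d$ still stands, but the terminology should be corrected.
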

	\begin{proof}
		Suppose that $G/N$ satisfies the quantitative UEG--law alternative.
		Let $H$ be a subgroup of $G$. 
		Consider the image $H/(N \cap H)$ of $H$ in $G/N$.
		If $H/(N \cap H)$ satisfies a law, then we see that $H$ also satisfies a law by \Cref{law-by-law}. 
		If $H/(N \cap H)$ contains has uniform exponential growth, then so does $H$ by \Cref{uniformexpgrowthquotient}. 
		
		Now for the other direction suppose that $H/(N \cap H)$ is a subgroup of $G/N$ with $H$ a subgroup of $G$.
		If $H$ satisfies a law, then so does any quotient and hence $H/(N \cap H)$ satisfies a law. 
		
		Now suppose that $H$ is exponentially growing. 
		Now suppose that $S$ is a generating set for $H/(N \cap H)$ and $H/(N \cap H)$ does not satisfy a law. 
		Then, since $N$ is finitely generated virtually nilpotent, we see that all its subgroups satisfy the same property. 
		Hence, $N\cap H$ is finitely generated virtually nilpotent. 
		Let $T$ be a generating set for $N\cap H$. 
		By picking preimages for each $s\in S$ and adding in $T$, we can obtain a generating set $U$ for $H$. 
		The map $\langle U\rangle\to \langle S\rangle$ given by sending each element of $T$ to the identity is distance non-increasing.
		Thus we obtain the inequality $B_n^S\cdot B_n^T\geq B_n^U$. 
		
		Since $\langle T\rangle$ is polynomially growing, we see that $ \lim\limits_{n\to\infty} \frac{\ln(B_n^T)}{n} = 0$ and hence $\lambda_S(H/(N \cap H))\geq \lambda_U(H)$ and thus we are done.  
	\end{proof}

	%
	%

	\section{Extensions of word hyperbolic groups that satisfy the law alternatives}
	\label{section:hyperbolic_extensions}
	
	In this section we will prove \Cref{prop:hyperbolicExtension} and \Cref{thm:twofree} that prove the quantitative law alternatives for extensions of hyperbolic groups.
	We begin with a short lemma regarding automorphisms of hyperbolic groups.
	
	\begin{lemma}\label{lem:ct}
		Let $H$ be a torsion-free hyperbolic group. 
		Let $\phi$ be an automorphism of $H$. 
		Let $a\in H$. 
		If there exist integers $k$ and $l$ such that $\phi(a^k) = a^l$, then $\phi(a) = a^{\pm1}$.
	\end{lemma}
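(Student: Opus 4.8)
The plan is to exploit the structure of centralizers in a torsion-free hyperbolic group together with the elementary fact that an automorphism sends an element that is not a proper power to an element that is not a proper power.

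First I would dispose of the degenerate cases: if $a = 1$ there is nothing to prove, so assume $a \neq 1$, whence $a$ has infinite order since $H$ is torsion-free. We may assume $k \neq 0$; then from $\phi(a^k) = a^l$ and the injectivity of $\phi$ we get $a^l \neq 1$, so $l \neq 0$ as well, and the hypothesis reads $\phi(a)^k = a^l$. The key claim is that $\phi(a)$ and $a$ commute. Set $g := a^l = \phi(a)^k$, a nontrivial element of $H$. In a torsion-free hyperbolic group the centralizer of any nontrivial element is infinite cyclic (this is the standard fact that centralizers of infinite-order elements in hyperbolic groups are virtually cyclic, hence cyclic in the torsion-free case). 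Now $a$ centralizes $g$, being a power of it, and $\phi(a)$ centralizes $g = \phi(a)^k$ for the same reason, so both $a$ and $\phi(a)$ lie in the infinite cyclic group $C_H(g)$; in particular they commute.

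Next I would introduce roots. Let $c$ generate the unique maximal cyclic subgroup of $H$ containing $a$; equivalently, $c$ is a root of $a$ that is not a proper power, and $a = c^p$ for a unique integer $p \geq 1$. (Uniqueness of the root up to inversion and of the exponent again follows from the cyclic centralizer statement.) Since $\phi(a)$ commutes with $a$, it also lies in $\langle c\rangle$, so $\phi(a) = c^q$ for some $q$, and $q \neq 0$ because $\phi(a) \neq 1$. On the other hand $\phi$ is an automorphism, so $\phi(c)$ is not a proper power either — otherwise $c = \phi^{-1}\bigl(\phi(c)\bigr)$ would be — and $\phi(a) = \phi(c)^p$. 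Comparing the two expressions $\phi(a) = \phi(c)^p = c^q$, in which $\phi(c)$ and $c$ are both non-proper-powers, uniqueness of the root exponent forces $|q| = p$. Hence $\phi(a) = c^{\pm p} = a^{\pm 1}$, as desired.

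The only nontrivial input, and hence the main point to get right, is the centralizer structure of torsion-free hyperbolic groups: that $C_H(g)$ is infinite cyclic for every $g \neq 1$, which simultaneously gives transitivity of commutation on nontrivial elements and uniqueness of roots up to inversion. I would simply cite this rather than reprove it; the remainder of the argument is formal. One should also double-check the edge cases in the hypothesis (allowing $k$ or $l$ to vanish) so that the statement is used only in the regime where the argument applies.
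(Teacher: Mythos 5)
Your proposal is correct and uses essentially the same ingredients as the paper's proof: the cyclic-centralizer structure of torsion-free hyperbolic groups and the fact that an automorphism carries maximal cyclic subgroups (equivalently, primitive roots) to maximal cyclic subgroups. The only organizational difference is that you first show $a$ and $\phi(a)$ commute by locating them in $C_H(a^l)$ and then pass to the primitive root, whereas the paper works with the primitive root $b$ of $a$ from the outset and shows directly that $\phi(b)$ lies in $\langle b\rangle$; the substance is the same, and you are right to flag that the degenerate case $k=l=0$ must be excluded for the statement to make sense.
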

	
	\begin{proof}
		Let $\langle b\rangle$ be the maximal cyclic subgroup of $H$ containing $a$, and
		let $\langle c\rangle$ be the maximal cyclic subgroup containing $\phi(b)$.
		By maximality, we can see that $\langle c\rangle = \langle \phi(b)\rangle$. 
		Thus we can take $c = \phi(b)$. 
		
		Also note that $c$ centralises $a^l$ since $a^l\in \langle c\rangle$. 
		Since the centraliser of $g$ is the maximal cyclic subgroup containing $g$, we see that $c\in \langle b\rangle$.
		By maximality, we obtain $\langle c\rangle = \langle b\rangle$. 
		Since $c = \phi(b)$ we obtain $\phi(b) = b^{\pm 1}$.
		
		We complete the proof by noting that $a = b^n$ for some integer $n$.
		Thus, $\phi(a) = \phi(b^n) = b^{\pm n} = a^{\pm 1}$.
	\end{proof}
	
	We will produce short free subgroups of (hyperbolic 2-free)-by-abelian groups, by exhibiting a single short infinite order element that lives in the kernel.  We consider all conjugates of this infinite order element by each of the generators and use \Cref{lem:ct} to obtain a pair of short words that form a free basis.  
	
	\twofree
	\begin{proof}
		Let $W = \{[t, t']\mid t, t'\in T\}$ and $G = \langle W\rangle$.
		If $G$ is trivial, then $\abrackets{T}$ is abelian and hence not exponentially growing. 
		Thus, we can assume that $G$ is not trivial. 
		Note that the image of each element of $W$ under the map
		$E \to A$ is trivial,
		so $G \le F$.
		Thus, if $G$ is not cyclic, then two elements of $W$ freely generate a free group and we are done. 
		
		Now suppose that $G$ is cyclic. Consider the conjugates $tGt^{-1} < F \cap \abrackets{T}$ for each $t\in T$. 
		If all the conjugates are equal to $G$, then $G$ is a normal subgroup of $\abrackets{T}$. 
		In this case, $\abrackets{T}$ fits into a short exact sequence $0\to G \to \abrackets{T}\to Q\to 0$, where $Q$ is abelian.
		Cyclic-by-abelian groups are virtually nilpotent and so not exponentially growing. 
		
		Now suppose $G$ is not normal in $\langle T\rangle$.
		Let $a$ be a generator for $G$.
		Let $M$ be the maximal cyclic subgroup of $F$ containing $G$. 
		If $tat^{-1}$ commutes with $a$, then $tat^{-1}\in M$ and there are a $k$ and $l$ such that $\phi(a^k) = a^l$.
		Hence by \Cref{lem:ct}, $tat^{-1} = a^{\pm 1}$ and $tGt^{-1} = G$.
		Since $G$ is not normal in $\abrackets{T}$, some conjugate $tGt^{-1}$ does not equal $G$. Hence, $a$ and $tat^{-1}$ generate a free subgroup.  Note that $a$ has $T$-length at most 4, so we have produced a 6-short free subgroup. 
	\end{proof}
	
	Some important examples of 2-free groups come from low-dimensional hyperbolic geometry.  Free groups are 2-free by the Nielsen--Schreier theorem.  It is also straightforward to see that fundamental groups of hyperbolic surfaces are always 2-free.  Moreover, fundamental groups of infinite volume hyperbolic 3-manifolds are also 2-free (see \cite[Theorem~A]{BaumslagShalen} or \cite[Theorem~VI.4.1]{JacoShalen}) as are fundamental groups of 3-manifold groups with sufficiently high first betti number \cite[Corollary~1.9]{ShalenWagreich}.
	Note also that in \Cref{thm:twofree} the kernel group $F$ can be taken to be any group satisfying a very strong version of the Tits alternative as will be shown in \Cref{thm:strongestTits}. 
	The following is an immediate consequence of \Cref{thm:twofree}.
	
	\freebycyclic

	In particular, \Cref{cor:free-by-cyclic} shows that fundamental groups of any mapping torus over a negatively curved surface are in $\lawFree_6$, that is, they satisfy the quantitative 6-short free subgroup--law alternative.  
	We make use of the following result to generalize \Cref{cor:free-by-cyclic} to extensions of hyperbolic groups.    
	The following is well-known to experts and follows immediately from work of Arzhantseva and Lysenok \cite[Theorem~1]{ArzhantsevaLysenok} by considering the constants found in their proof and preceding lemmas. Ideas of the proof can also be found in work of Koubi \cite[Theorem~1.1]{Koubi}.
	
	\begin{theorem}[Uniform Tits alternative for hyperbolic groups]
		\label{thm:hyperbolicTitsAlternative}
		Suppose $H$ is a word hyperbolic group.
		Any finitely generated nonelementary subgroup of $H$ contains a $D$-short free subgroup where $D = D(r, \delta)$, where $r$ is the size of a minimal generating set for $H$ and where $\delta$ is the hyperbolicity constant associated to a fixed generating set of size $r$. 
	\end{theorem}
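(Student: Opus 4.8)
The plan is to derive the theorem from the effective estimates in the work of Arzhantseva and Lysenok \cite{ArzhantsevaLysenok} (the geometric heart is also present in Koubi \cite{Koubi}), the only points to check being that every constant appearing there is a function of $\delta$ and $r$ alone, and that the argument applies to an arbitrary finitely generated nonelementary subgroup $\Gamma \le H$ rather than to $H$ itself (note that $\Gamma$ need not be hyperbolic, so one cannot simply invoke the theorem for $\Gamma$). To this end one works not inside $\Gamma$ but with the isometric action of $\Gamma$ on the Cayley graph $X = \Cay(H, S_H)$ of $H$ with respect to a fixed generating set $S_H$ of size $r$ realizing the hyperbolicity constant $\delta$. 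This action is $\delta$-hyperbolic; it is proper, with a properness function bounded in terms of $r$ alone (since $H$ acts freely on the vertices of $X$, a ball of radius $R$ in $X$ has at most $(2r)^{R+1}$ vertices, so $\#\{g\in\Gamma : d(x,gx)\le R\}$ is bounded in terms of $r$ and $R$); and it is nonelementary exactly because $\Gamma$ is not elementary as a subgroup of $H$. Fix a finite generating set $T$ of $\Gamma$; it suffices to exhibit two words in $T$ of length at most $D(r,\delta)$ that generate a free subgroup of rank two, which we produce by a ping-pong argument in $X$.

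The argument splits according to the joint displacement $L = \inf_{x\in X}\ \max_{t\in T\cup T^{-1}} d(x,tx)$. If $L$ is smaller than the Margulis constant of $X$ — which for a uniformly proper action on a $\delta$-hyperbolic space depends only on $\delta$ and the properness function, hence only on $\delta$ and $r$ — then a Margulis-type lemma forces $\Gamma$ to be virtually nilpotent, hence (being a subgroup of a hyperbolic group) virtually cyclic, contradicting nonelementarity. So $L\ge \varepsilon_0 = \varepsilon_0(\delta,r) > 0$: every point of $X$ is moved a definite amount by some element of $T\cup T^{-1}$. From this one extracts, at bounded cost in $T$, a loxodromic element $a$ whose pair of fixed points on $\partial X$ is not preserved by all of $T$ (otherwise $\Gamma$ would fix a pair of boundary points), and hence a second, transverse loxodromic direction; replacing $a$ and this second element by powers bounded in terms of $\delta/\varepsilon_0$ so that the relevant Gromov-product and ping-pong inequalities hold yields two elements of $T$-length at most $D(r,\delta)$ generating a free subgroup. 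Applying this with $\Gamma = H$ and an arbitrary generating set recovers Koubi's uniform exponential growth bound for $H$ as a special case.

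The main obstacle is the effective ping-pong in the regime $L\ge\varepsilon_0$: one must produce the two transverse loxodromic directions and bound the exponents needed for ping-pong \emph{uniformly}, that is, in terms of $\delta$ and $r$ only and not in terms of $T$ or of the translation lengths of individual generators. This is precisely the content of the lemmas preceding \cite[Theorem~1]{ArzhantsevaLysenok}; the two quantitative inputs are the uniform positive lower bound on the stable translation lengths of loxodromic elements and the uniform bound on how far apart the two axes may be forced to lie, and inspecting their proofs shows all constants are of the asserted form $D = D(r,\delta)$. Hence any finitely generated nonelementary subgroup of $H$ contains a $D(r,\delta)$-short free subgroup.
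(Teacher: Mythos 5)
The paper offers no proof of this statement beyond the citation to Arzhantseva--Lysenok \cite[Theorem~1]{ArzhantsevaLysenok} (and Koubi) together with the remark that the constants in their lemmas depend only on $r$ and $\delta$; your proposal takes the same route, but usefully fleshes out the two observations the paper leaves implicit, namely that one must run the argument on the isometric action of the subgroup $\Gamma$ on $\Cay(H, S_H)$ rather than on a Cayley graph of $\Gamma$ itself (since $\Gamma$ need not be hyperbolic), and that the displacement dichotomy and the ping-pong constants in the cited proofs involve only $\delta$ and the valence of $\Cay(H, S_H)$. One small terminological caveat: the small-displacement branch of Koubi's and Arzhantseva--Lysenok's arguments concludes directly that the subgroup is elementary (bounded orbit or a fixed pair of boundary points) rather than passing through a general Margulis lemma asserting virtual nilpotence; your route through virtual nilpotence is not wrong for subgroups of hyperbolic groups, but it invokes a more general (and harder to source uniformly) statement than is actually needed.
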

	
	We are now ready to prove the main result of this section, \Cref{prop:hyperbolicExtension}.  The proof will resemble that of \Cref{thm:twofree} where instead of commutators we will use the law coming from the quotient group.
	
	\hyperbolicExtension
	
	We prove \Cref{prop:hyperbolicExtension} in the case when $L \in \lawFree_N$.  In the setting where $L \in \lawUEG_N$ note that rather than using \Cref{shortfreeinquotient}, one should apply \Cref{uniformexpgrowthquotient} and the rest of the proof follows the same argument.  
	
	\begin{proof}
		Let $n$ be a positive integer and $T\subset E$
		a collection of $n$ elements. 
		Consider $\vphi(\abrackets{T}) < L$. Either $\vphi(\abrackets{T})$ contains an $N$-short free subgroup with respect to $\vphi(T)$ or $\vphi(\abrackets{T})$ satisfies the law $w_L$ because $L \in \lawFree_N$.  
		Applying \Cref{shortfreeinquotient}, we may assume that $\vphi(\abrackets{T})$ does not contain a short free subgroup and hence satisfies the law $w_L$. 
		
		Let $W := \{ w_L(t_1, \dotsc, t_m) \mid  t_i \in T \} \subseteq \abrackets{T}$.  By construction $\abrackets{W} \subset \ker(\vphi) = H$.  
		If $\abrackets{W}$ is nonelementary then $\abrackets{W}$ contains an $D$-short free subgroup where $D = D(r,\delta)$ is the constant from \Cref{thm:hyperbolicTitsAlternative}, so $\abrackets{T}$ would contain a $(D\cdot |w_L|)$-short free subgroup.  
		In light of this, we further assume that $\abrackets{W} < \ker(\vphi) \cap G$ is virtually cyclic. 
		The final step in this proof is the following lemma.

		\begin{lemma}
			\label{hyperbolic_iterated_automorphism}
			Let $H$ be a hyperbolic group and $W\subseteq H$ be a non-trivial finite subset. 
			Let $T \subseteq \Aut(H)$. 
			Let $U_k = \langle \bigcup_{j = -k}^k T^j(W)\rangle$,
			and let $U = \bigcup_{k = 0}^\infty U_k$. 
			Let $B$ be the size of the maximal finite subgroup of $H$.  
			Then either $U_{B+1}$ is non-elementary or $U_{2B} = U$ is virtually cyclic.
		\end{lemma}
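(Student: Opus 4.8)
The plan is to prove the stated dichotomy directly: if $U_{B+1}$ is non-elementary we are already in the first case of the lemma, so I assume $U_{B+1}$ is elementary and show that $U$ is virtually cyclic and that the chain is already stationary at index $2B$. I use throughout the structure theory of elementary subgroups of a word-hyperbolic group: an infinite elementary subgroup $A\le H$ has a two-point limit set $\Lambda(A)\subset\partial H$ and lies in a unique maximal elementary (hence virtually cyclic) subgroup $E(A)=\Stab_H(\Lambda(A))$; and since every $\phi\in\Aut(H)$ extends to a homeomorphism of $\partial H$, one has $\phi(E(A))=E(\phi(A))$. I also use repeatedly that $U_{k+1}$ is generated by $U_k$ together with the sets $\phi(U_k)$, $\phi\in T^{\pm1}$, so that $U_k=U_{k+1}$ forces the chain to be stationary from index $k$ on, with $U=U_k$.

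I first dispose of the cheap cases. Because $W\ne\{1\}$ we have $|U_0|\ge2$, and by Lagrange a strict inclusion of finite subgroups at least doubles the order; since $H$ has no finite subgroup of order exceeding $B$, the chain cannot be strictly increasing all the way through $U_{B+1}$ when $U_{B+1}$ is finite. In that case $U_i=U_{i+1}$ for some $i\le B$, so $U=U_i=U_{2B}$ is finite, hence virtually cyclic. Otherwise $U_{B+1}$ is infinite elementary; let $k^{*}$ be the least index with $U_{k^{*}}$ infinite. Applying the doubling estimate below $k^{*}$ (vacuous if $k^{*}=0$) gives $|U_{k^{*}-1}|\ge 2^{k^{*}}$, so $k^{*}\le\log_2 B\le B$; in particular $U_{k^{*}+1}\le U_{B+1}$ is (infinite) elementary.

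Next I produce a $T$-invariant envelope. For each $\phi\in T^{\pm1}$ the subgroup $\langle U_{k^{*}},\phi(U_{k^{*}})\rangle\le U_{k^{*}+1}$ is infinite elementary, so $U_{k^{*}}$ and $\phi(U_{k^{*}})$ have the same two-point limit set; hence $\phi(E)=E(\phi(U_{k^{*}}))=E(U_{k^{*}})=:E$. Thus $E$ is $T^{\pm1}$-invariant, and as $U_0\le U_{k^{*}}\le E$ the recursion yields $U_j\le E$ for all $j$, so $U\le E$ is virtually cyclic --- half of what we want. To bound where the chain stabilizes, let $F=F(E)$ be the maximal finite normal subgroup of $E$; it is characteristic with $|F|\le B$, the quotient $E/F$ is infinite cyclic or infinite dihedral, and every $\phi\in T^{\pm1}$ induces $\bar\phi\in\Aut(E/F)$ preserving the characteristic maximal torsion-free cyclic subgroup $C\le E/F$ and acting on $C$ by $\pm1$. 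The crux is the behaviour of the images $\bar U_j\le E/F$ for $j\ge k^{*}$. In the cyclic case $\bar U_j\le\Z$ is fixed by every $\bar\phi$, so $\bar U_{j+1}=\bar U_j$ already from $j=k^{*}$. In the dihedral case $\bar U_j$ is either a rotation subgroup --- again fixed by every $\bar\phi$ --- or a full infinite dihedral subgroup of rotation index $m_j$; in the latter case, computing the action of $\bar\phi$ on reflections shows that $\bar U_{j+1}\cap C$ is generated by $\bar U_j\cap C$ together with the translations obtained as products of pairs of the reflections occurring in $\bar U_j$ and in the $\bar\phi(\bar U_j)$, so that $\bar U_{j+1}\cap C=\langle t^{m_{j+1}}\rangle$ with $m_{j+1}=\gcd(m_j,\{N_\phi\})$ for finitely many integers $N_\phi$ determined by $\phi$ and $\bar U_j$. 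The key observation is that $m_{j+1}$ divides every $N_\phi$ by construction, so at the next step the analogous quantities are $\equiv0\pmod{m_{j+1}}$, whence $m_{j+2}=m_{j+1}$: the rotation index changes at most once past index $k^{*}$. In all cases, then, $\bar U_j$ is constant from some index $j_1\le k^{*}+1$ on; beyond $j_1$, each strict inclusion $U_j\subsetneq U_{j+1}$ forces $U_j\cap F\subsetneq U_{j+1}\cap F$, which by Lagrange ($|F|\le B$) happens at most $\log_2 B$ more times. Hence the chain is stationary by index $k^{*}+1+\log_2 B\le 2\log_2 B+1\le 2B$, giving $U=U_{2B}$.

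The genuinely delicate step I anticipate is exactly this dihedral subcase --- arguing that the rotation index of $\bar U_j$ stabilizes essentially immediately rather than drifting downward over many steps --- while everything else is bookkeeping with Lagrange's theorem and standard facts about elementary subgroups and the boundary action. One should also verify the harmless but slightly fiddly point that, with the definition of $U_k$ as given in the statement, $U_{k+1}$ really is generated by $U_k$ together with its $T^{\pm1}$-images.
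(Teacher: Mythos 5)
Your proof takes the same overall two-stage shape as the paper's (bound the finite phase by Lagrange; then, once the $U_j$ are infinite elementary, pass to a maximal virtually cyclic $E$ with maximal finite normal subgroup $F$, study the induced automorphisms of $E/F$, and bound the residual growth inside $F$ by Lagrange again), but it is noticeably more careful at the two points the paper glosses over, and this is more than cosmetic. (i) You establish explicitly, via the action on $\partial H$ and the fact that infinite subgroups of an elementary group share its two-point limit set, that the maximal elementary subgroup $E=E(U_{k^*})$ is $T^{\pm1}$-invariant. The paper writes down $\psi\colon Z\to Z$, $\psi(gF)=t(g)F$, without first justifying $t(C)=C$; your limit-set computation $\phi(E(A))=E(\phi(A))$ is exactly what that step needs. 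As a byproduct you also see directly that once $U_{k^*+1}$ is elementary \emph{every} $U_j$ lies in $E$, which fixes a small mismatch between the statement of the paper's second claim ("or $U_{\ell+1}$ is non-elementary") and what its proof actually shows. (ii) You treat the infinite dihedral quotient directly rather than passing to index-two subgroups of each $U_j$ as the paper does. The step you flag as delicate --- that the rotation index $m_j$ of $\overline{U}_j\le E/F\cong D_\infty$ can drop at most once past $k^*$ --- is correct as sketched: fixing a shift $a$ for the reflections, the integers $N_\phi'$ computed at step $j+1$ agree with the $N_\phi$ from step $j$ modulo $m_{j+1}$, and $m_{j+1}=\gcd(m_j,\{N_\phi\})$ already divides each $N_\phi$, so $m_{j+2}=m_{j+1}$. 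The payoff of this extra care is a slightly sharper bound, stationary by index $2\lfloor\log_2 B\rfloor+1\le 2B$. The one thing you deferred --- that $U_{k+1}=\langle U_k,\, \phi(U_k): \phi\in T^{\pm1}\rangle$ with the $T^j$ notation as written --- is worth a line, but the paper's own proof relies on exactly the same reading, so you are consistent with it.
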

		
		\begin{proof}[Proof of lemma]
			We prove this with a sequence of claims. A key point to note is that if $U_\ell = U_{\ell+1}$, then $U = U_\ell$. 
			Let $B$ be the maximal size of a finite subgroup of $H$, note that this is finite by hyperbolicity of $H$. 
			\begin{claim}
				\label{claim:Expanding_finite_subgroup}
				Either $U_B$ is infinite or $U = U_B$ is finite.
			\end{claim} 
			\begin{proof}[Proof of \Cref{claim:Expanding_finite_subgroup}]
				Suppose that $U_{B-1} \neq U_B$.
				Then we have that $U_{\ell - 1} \neq U_\ell$ for all $\ell \leq B$.
				If $U_\ell$ is infinite for some $\ell \leq B$ then so is $U_B$. So suppose $U_B$ is finite. We have an increasing sequence of finite subgroups $U_0\subsetneq \dots\subsetneq U_B$. 
				Hence $|U_B| > B+1$ because $|U_0| \geq 2$ by non-triviality of $W$.
				This gives a contradiction. 
			\end{proof}
			
			We can now suppose that $U_{B}$ is infinite.
			\begin{claim}
				\label{claim:Expanding_virtually_cyclic}
				Suppose $U_\ell$ is infinite and virtually cyclic (henceforth elementary). 
				Then either $U_{\ell+B} = U_{\ell+B+1}$ or $U_{\ell + 1}$ is non-elementary. 
			\end{claim}
			\begin{proof}[Proof of \Cref{claim:Expanding_virtually_cyclic}]
				Throughout let $j\geq \ell$, so $U_j$ is infinite. 
				Elementary groups never have non-elementary subgroups, so suppose that $U_{\ell+B}$ is elementary. 
				Let $C \leq H$ be a maximal virtually cyclic subgroup containing $U_{\ell+B}$. 
				Since $C$ is two-ended, a result of Wall \cite{Wall} shows that $C$ fits into a short exact sequence $1\to F\to C \stackrel{\phi}{\to} Z \to 1$ where $F$ is finite and $Z$ is either $\Z$ or the infinite dihedral group $D_\infty$. 
				In the case that $Z = \Z$, we have that $F = \{c\in C \mid \operatorname{ord}(c)<\infty\}$. 
				By passing to an index 2 subgroup of $C$ we can assume that $Z \cong \Z$. 
				This requires us to pass to index 2 subgroups of each $U_j$. 
				Note that $U_j$ is non-elementary if and only if its index 2 subgroups are. 
				
				Let $\phi\colon C\to Z\cong\Z$ be the map from above. 
				We will show that for each $x\in U_{j}$ the set $\{\phi(t^i(x))\mid t\in T, i\in \Z\}$ has size at most 2. 
				We prove this as follows, for each $t\in T$, we have that $t(C)\cong C$ and hence we have a map $t(C)\to Z$. 
				We will first show that there is a homomorphism $\psi\colon Z\to Z$ such that $\psi(gF) = t(g)F$. 
				Define the map as above, we must check this is well defined. 
				Suppose $gF = g'F$, then $g^{-1}g'\in F$ and hence has finite order.
				Thus we see that $t(g^{-1}g')$ has finite order in $t(C)$ and hence $\phi(t(g^{-1}g')) = 0$ in $Z$ and hence $t(g)F = t(g')F$. 
				We can see that $\psi$ is an isomorphism since $t$ is an isomorphism and we can apply the argument with $t^{-1}$. 
				Since the only isomorphisms $\Z\to \Z$ are $\pm 1$ we obtain that $\{\phi(t^i(x))\mid t\in T, i\in \Z\} = \{\pm \phi(x)\}$. 
				
				Thus we see that $\phi(U_j) = \phi(U_{j+1})$ whenever $U_j$ is infinite. 
				We deduce that if $U_j\neq U_{j+1}$, then $U_j\cap F\neq U_{j+1}\cap F$ but as in the proof of \Cref{claim:Expanding_finite_subgroup} this is a finite subgroup and so can only expand $B$ times.
				Hence taking $j = \ell$ we obtain the desired result. 
			\end{proof}
			This completes the proof of the lemma, since after $B$ steps we will have $U_{B}$ infinite and after another $B$ steps we have a non-elementary subgroup. Thus, we can take $M = 2B$. 
		\end{proof}
		
		Returning to the proof of \Cref{prop:hyperbolicExtension}, we can apply \Cref{hyperbolic_iterated_automorphism} to see that either $U_{2B}$ is non-elementary, in which case $\abrackets{T}$ would contain a $D\cdot(4B + |w_L|)$-short free subgroup,
		or $U$ is elementary. 
		In this latter case we see that $U$ is normal in $\langle T\rangle$. 
		Thus we have a short exact sequence $1\to U \to \langle T \rangle \to \langle \phi(T)\rangle \to 1$.

		We can now pass to a finite index characteristic cyclic subgroup $K$ of $U$ whose index is bounded by a constant $k_1$ depending only on $B$. 
		Since $K$ is characteristic in $U$ and $U$ is normal in $\langle T\rangle$ we see that $K$ is normal in $\langle T\rangle$. 
		Thus we obtain short exact sequences, 
		\begin{center}
			\begin{tikzcd}
				1 \arrow[r] & 
				K \arrow[r] & 
				\abrackets{T} \arrow[r] & 
				Q \arrow[r] & 
				1
			\end{tikzcd}
		\end{center}
		where $Q = \abrackets{T} / K$, and
		\begin{center}
			\begin{tikzcd}
				1 \arrow[r] & 
				F \arrow[r] & 
				Q \arrow[r] & 
				\abrackets{\phi(T)} \arrow[r] & 
				1,
			\end{tikzcd}
		\end{center}
		where $F$ is a finite group of size bounded by $k_1$. 
		Applying \Cref{law-by-law} twice, we see that $\langle T\rangle$ satisfies a law. 
		Since $|F| = k_1$ and $K$ satisfies the commutator law, the law we obtain has length bounded by $4\cdot k_1!\cdot|w_L|$.

		Note that if $H$ admits a $\delta$-hyperbolic Cayley graph with respect to a generating set of size $r$ then $B\leq 2(2r - 1)^{4\delta + 2}$  \cite[{proof of III.$\Gamma$~Theorem~3.2}]{BridsonHaefliger}.
		
		To obtain the final statement we procced as above. 
		If $\langle \phi(T)\rangle$ has exponential growth, then we see that $\langle T\rangle$ has exponential growth and obtain uniformity from \Cref{uniformexpgrowthquotient}. 
		Thus, once again we can assume that $\langle \phi(T)\rangle$ does not have exponential growth and hence satisfies a law. 
		Once again, we have a short exact sequence $1\to U \to \langle T \rangle \to \langle \phi(T)\rangle \to 1$, if $U$ is not virtually cyclic, then we obtain a $D\cdot(4B + |w_L|)$-short free subgroup. 
		Thus, we can assume that $U$ is virtually cyclic. 
		We will show that under these assumptions $\langle T\rangle$ has sub-exponential growth completing the proof. 
		
		Once again, we can pass to a characteristic finite index cyclic subgroup $K$ of $U$. 
		Thus, we obtain a short exact sequence $1\to K \to \langle T \rangle \to Q \to 1$. 
		By passing to an index 2 subgroup of $\langle T\rangle$, we can assume that $K$ is central. 
		Also $Q$ fits into a short exact sequence $1\to F \to Q \to \langle \phi(T)\rangle\to 1,$ where $F$ is finite. 
		Hence $Q$ is quasi-isometric to $\langle \phi(T)\rangle$ and has sub-exponential growth. 
		And $\langle T\rangle$ is quasi-isometric to a central extension of $Q$ and so also has sub-exponential growth by \cite{Zheng}.
	\end{proof}

	%
	%

	\section{Automorphism groups satisfying quantitative alternatives}
	\label{section:automorphism_groups}
	
	In this section we use results from \Cref{section:hyperbolic_extensions} and extend them to prove \Cref{cor:oneendedrelhyp} and \Cref{AutRAAGalternative} that demonstrate that automorphism groups of certain one-ended groups satisfy quantitative subgroup alternatives.  In particular, they combine to give the statement of \Cref{Aut_Combo_Result}.  
	We also show that the automorphism group of any torsion-free one-ended hyperbolic group has locally uniform exponential growth.    
	Our methods apply to automorphism groups by viewing them as extensions by well-studied related groups.

	\begin{lemma}
		\label{Aut from Out: hyperbolic groups}
		Suppose that $H$ is a word-hyperbolic group.  
		Let $O$ be a finite index subgroup of $\Out(H)$. 
		If $O$ satisfies either quantitative law alternative, then so does $\Aut(H)$. 
		Also if $O$ has locally uniform exponential growth so does $\Aut(H)$. 
	\end{lemma}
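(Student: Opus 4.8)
The plan is to reduce to \Cref{prop:hyperbolicExtension} by passing to a finite-index subgroup of $\Aut(H)$ lying over $O$, and then to push the conclusion back down to all of $\Aut(H)$ using that both halves of the statement are commensurability invariants. First I would recall the short exact sequence
\[
1 \to \Inn(H) \to \Aut(H) \xrightarrow{\pi} \Out(H) \to 1
\]
and set $A := \pi^{-1}(O)$, so that $[\Aut(H):A] = [\Out(H):O] < \infty$ and $A$ itself fits into a short exact sequence $1 \to \Inn(H) \to A \to O \to 1$. The key observation is that $\Inn(H)$ is word-hyperbolic: the conjugation homomorphism identifies $\Inn(H)$ with $H/Z(H)$, and for a word-hyperbolic group $H$ the center $Z(H)$ is either finite or of finite index (the latter only when $H$ is two-ended), so in either case $H/Z(H)$ is again word-hyperbolic. (If $H$ is finite there is nothing to prove: then $\Aut(H)$ is finite, hence satisfies a law and has no exponentially growing subgroups.)

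With this in place, since $O \in \lawEither_N$, applying \Cref{prop:hyperbolicExtension} to $1 \to \Inn(H) \to A \to O \to 1$ (kernel hyperbolic, quotient in $\lawEither_N$) immediately gives $A \in \lawEither$, and its ``moreover'' clause gives that $A$ has locally uniform exponential growth whenever $O$ does. It then remains to deduce the corresponding statements for $\Aut(H)$ from the fact that $A$ has finite index in it. For the law alternatives this is exactly \Cref{prop:commensurabilityinvariant}, so no further work is needed there.

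The one point that takes a little care — and which I expect to be the main (mild) obstacle — is the downward step for locally uniform exponential growth, since \Cref{prop:commensurabilityinvariant} is phrased only for the law alternatives. Here I would argue directly. Given a finitely generated exponentially growing subgroup $G \le \Aut(H)$, the subgroup $G \cap A$ has index $k \le [\Aut(H):A] = [\Out(H):O]$ in $G$ and is again exponentially growing (for instance by \cite[Corollary~3.6]{ShalenWagreich}), so $\growth(G\cap A) \ge c_A$, where $c_A$ is the locally-uniform-exponential-growth constant of $A$. Since a Schreier generating set for $G \cap A$ with respect to any finite generating set $S$ of $G$ consists of words of $S$-length at most $2k-1$, one gets $\growth_S(G) \ge \growth(G\cap A)/(2k-1) \ge c_A/(2[\Out(H):O]-1)$; taking the infimum over $S$ yields a positive lower bound for $\growth(G)$ independent of $G$. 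That is precisely locally uniform exponential growth for $\Aut(H)$, and the lemma follows.
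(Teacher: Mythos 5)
Your proof is correct and uses exactly the same ingredients as the paper: the short exact sequence $1\to\Inn(H)\to\Aut(H)\to\Out(H)\to1$, the observation that $\Inn(H)\cong H/Z(H)$ is again word-hyperbolic, \Cref{prop:hyperbolicExtension}, and commensurability invariance together with a Shalen--Wagreich-style argument for passing locally uniform exponential growth to a finite-index overgroup. The only genuine difference is the order in which these are deployed. The paper first promotes $O$ to all of $\Out(H)$ (via \Cref{prop:commensurabilityinvariant} for the law alternatives, and intersection plus \cite[Corollary~3.6]{ShalenWagreich} for growth), and then applies \Cref{prop:hyperbolicExtension} once to the full sequence $1\to\Inn(H)\to\Aut(H)\to\Out(H)\to1$. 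You instead pass to the preimage $A=\pi^{-1}(O)$, apply \Cref{prop:hyperbolicExtension} to $1\to\Inn(H)\to A\to O\to 1$, and then push the conclusions from the finite-index subgroup $A$ up to $\Aut(H)$. Both orderings work and require the same two ``commensurability'' steps; your explicit Schreier-generator estimate ($S$-length at most $2k-1$, hence $\growth_S(G)\ge\growth(G\cap A)/(2k-1)$) is a correct and helpful unpacking of the Shalen--Wagreich citation the paper makes tersely. One small stylistic note: you do not actually need to treat the finite case of $H$ separately, since for finite $H$ the quotient $H/Z(H)$ is a finite (hence hyperbolic) group and the argument runs unchanged.
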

	\begin{proof}
		By \Cref{prop:commensurabilityinvariant}, we obtain that if $O$ satisfies either quantitative law alternative, then so does $\Out(H)$. 
		Also groups inherit locally uniform exponential growth from their finite index subgroups by intersecting and a result of Shalen and Wagreich \cite[Corollary~3..6]{ShalenWagreich}. 
		Thus, if $O$ has locally uniform exponential growth so does $\Out(H)$. 
		
		Recall that we have short exact sequence 
		\begin{center}
			\begin{tikzcd}
				1 \arrow[r] & 
				\Inn(H) \arrow[r] & 
				\Aut(H) \arrow[r] & 
				\Out(H) \arrow[r] & 
				1
			\end{tikzcd}
		\end{center}
		The inner automorphism group $\Inn(H) \cong H / Z(H)$ where $Z(H)$ denotes the center of $H$.  Since, word-hyperbolic groups are either virtually cyclic or have finite centers $\Inn(H)$ is again hyperbolic.  Thus, we can appeal to \Cref{prop:hyperbolicExtension} to complete the proof. 
	\end{proof}

	\begin{rem}
		It remains open whether either $\Aut(\Fn)$ or $\Out(\Fn)$ have locally uniform exponential growth. 
		From \Cref{Aut from Out: hyperbolic groups} we obtain that if $\Out(\Fn)$ has locally uniform exponential growth then $\Aut(\Fn)$ does as well.  
	\end{rem}
	
	Before proving the quantitative UEG--law alternative for automorphism groups of one-ended hyperbolic groups (\Cref{oneEndedHyperbolic_lawAlternative}) we concentrate on extensions of groups that are not necessarily hyperbolic.  
	
	We say that a group satisfies the \emph{largeness alternative} when any finitely generated subgroup either surjects a nonabelian free group or is free abelian.  Important examples of groups satisfying the largeness alternative are right-angled Artin groups \cite{Baumdisch:2-generatedRaagSubgroups}, residually free groups \cite[Lemma~1]{Baumslag:ResiduallyFree}, and certain finite index subgroups of tubular groups \cite[Theorem~3.7]{Button:TubularStongestTits}.  All such groups are in $\mathcal{L}^{Free}_1$ with law $w = [a,b]$, so satisfy the quantitative free subgroup--law alternative.  
	The largeness alternative has been studied by Antolin and Minasyan who call it the \emph{strongest Tits alternative} and show that it and several other subgroup alternatives are closed under taking graph products \cite[Corollary~6]{AntolinMinasyan}.  An immediate consequence of their result is that graph products of right-angled Artin groups and residually free groups also satisfy the largeness alternative.
	We show the following theorem:
	
	\begin{theorem}
		\label{thm:strongestTits}
		Let $K$ by a group satisfying the largeness alternative with $\vcd(K) \leq d$.  Let $L \in \mathcal{L}^*_N$ be a group satisfying either of the quantitative law alternatives.  Let $E$ be a group fitting into a short exact sequence $1 \to K \to E \stackrel{\vphi}{\to} L \to 1$. Then $E \in \mathcal{L}^*_M$ where $M \geq 2d + |w_L|$ with associated law $[w_L(x_1, \dotsc, x_m), w_L(y_1, \dotsc, y_m)]$.  
	\end{theorem}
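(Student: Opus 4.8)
The plan is to run the common framework of the paper, with the largeness alternative and the bound $\vcd(K)\le d$ playing the roles that word-hyperbolicity and the order of the largest finite subgroup play in \Cref{prop:hyperbolicExtension}. Fix a finite generating set $T$ for a subgroup $G=\abrackets{T}\le E$, and aim to show that $G$ either contains a free subgroup that is $M$-short with respect to $T$ (equivalently, $\growth_T(G)\ge\log 3/M$ in the $\lawUEG$ variant) or satisfies the displayed law. First push forward along $\vphi$: since $L\in\lawEither_N$, the subgroup $\vphi(G)=\abrackets{\vphi(T)}$ either contains an $N$-short free subgroup with respect to $\vphi(T)$, or has entropy $\ge\log 3/N$, or satisfies $w_L$. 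In the first two cases \Cref{shortfreeinquotient} (resp.\ \Cref{uniformexpgrowthquotient}) lifts the conclusion to $G$ with the same constant $N\le M$, so from now on I assume $\vphi(G)$ satisfies $w_L$.

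Next, extract short kernel elements: put $W=\{\,w_L(t_1,\dots,t_m):t_i\in T\,\}\subseteq G$. Each element of $W$ dies in $L$, so $W\subseteq K\cap G$, and each has $T$-length at most $|w_L|$. If $W$ is trivial then $G$ already satisfies $w_L$ and we are done, so assume $\abrackets{W}\ne 1$; note that since $K$ satisfies the largeness alternative it has no non-trivial finite subgroups, hence is torsion-free, which is why the constant $d$ here replaces the bound on finite subgroups in \Cref{hyperbolic_iterated_automorphism}. Since $K\trianglelefteq E$, conjugation by $T$ induces automorphisms of $K$ preserving $K\cap G$, and — exactly as for the chain $U_k$ of \Cref{hyperbolic_iterated_automorphism} — I form the ascending chain $V_k=\abrackets{\,uwu^{-1}:w\in W,\ u \text{ a word of length}\le k \text{ in }T\cup T^{-1}\,}\le K\cap G$, so that $V_0=\abrackets{W}$, $V_{k+1}=\abrackets{V_k,\,tV_kt^{-1}:t\in T\cup T^{-1}}$, every $V_k$ is finitely generated by elements of $T$-length at most $2k+|w_L|$, and once $V_k=V_{k+1}$ the chain is stationary and $V_k$ is normalized by $G$.

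The core of the argument is the analogue of \Cref{hyperbolic_iterated_automorphism}. Applying the largeness alternative to each finitely generated $V_k$, it either surjects a nonabelian free group or is free abelian, and I claim that \emph{either some $V_k$ with $0\le k\le d$ surjects a nonabelian free group, or $V_0,\dots,V_d$ are all free abelian and the chain is stationary by step $d-1$}. Granting this, the proof finishes as in \Cref{prop:hyperbolicExtension}. In the first case two of the conjugate generators of $V_k$ — words of $T$-length $\le 2k+|w_L|\le 2d+|w_L|\le M$ — map onto a free basis of a rank-two free group $F_2$ (any two-element generating set of $F_2$ is a basis), hence by \Cref{shortfreeinquotient} generate an $M$-short free subgroup of $G$ (resp., by \Cref{uniformexpgrowthquotient}, $\growth(V_k)\ge\log 3$, so $\growth_T(G)\ge\log 3/M$). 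In the second case $V:=V_{d-1}$ is a free abelian normal subgroup of $G$, and as in the proof of \Cref{prop:hyperbolicExtension} one combines the resulting short exact sequences (with quotient mapping onto $\vphi(G)$, which satisfies $w_L$) through \Cref{law-by-law} to conclude that $G$ satisfies $[x,y]\circ w_L=[w_L(x_1,\dots,x_m),w_L(y_1,\dots,y_m)]$. The claim itself uses that $\vcd$ is monotone under subgroups, so $\vcd(K)\le d$ forces every free abelian subgroup of $K$ to have rank at most $d$: if $V_0,\dots,V_d$ are all free abelian their ranks form a non-decreasing sequence of $d+1$ integers in $\{1,\dots,d\}$ (recall $V_0\ne 1$), so $\operatorname{rank}(V_j)=\operatorname{rank}(V_{j+1})$ for some $j\le d-1$.

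The main obstacle is upgrading ``$\operatorname{rank}(V_j)=\operatorname{rank}(V_{j+1})$'' to ``$V_j=V_{j+1}$'' — the analogue of \Cref{lem:ct} in this setting. Concretely, one must show that a free abelian subgroup of $K$ cannot be properly enlarged, to finite index, by adjoining conjugates of itself that commute with it; equivalently, that if $a\in K$ generates one of the cyclic factors of $V_j$ and a conjugate $tat^{-1}$ satisfies $(tat^{-1})^k\in\abrackets{a}$, then $tat^{-1}\in\abrackets{a}$. This is precisely where structural features of the groups under consideration (right-angled Artin groups, residually free groups, and their graph products) beyond the bare largeness alternative must enter, so as to exclude configurations whose union $\bigcup_k V_k$ is a divisible abelian group such as $\Q$. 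I expect this stabilization lemma, together with the bookkeeping that keeps the resulting constant equal to $2d+|w_L|$ and the law exactly $[w_L(x_1,\dots,x_m),w_L(y_1,\dots,y_m)]$, to be where essentially all of the work lies; the reduction to $\vphi(G)$ satisfying $w_L$, the construction of the chain, and the law composition are direct adaptations of \Cref{prop:hyperbolicExtension}.
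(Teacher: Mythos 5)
Your reading of the paper's strategy is exactly right, and you have put your finger on the genuine soft spot. The paper's own proof is terse at the crucial step: after reducing to $W$ generating a free abelian subgroup and forming the chain $U_0\le U_1\le\cdots\le U_d$, it asserts that ``Normality follows from having considered $(d+1)$ conjugates and the structure of subgroups of finitely generated free abelian groups,'' i.e.\ precisely the jump from ``$\rk(U_j)=\rk(U_{j+1})$ for some $j<d$'' to ``$U_d$ is normal'' (equivalently, that the chain stabilizes). There is no stabilization lemma in the paper; the implication is simply claimed. You are correct that a rank repeat does \emph{not} force stabilization, and the paper gives no further argument.

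To see that this step genuinely fails from the stated hypotheses alone, take $E = BS(1,2) = \langle a,t \mid tat^{-1}=a^2\rangle$, $K = \Z[1/2]$ (the normal closure of $a$), $L=\Z$. Here $K$ is locally cyclic so it satisfies the largeness alternative (every finitely generated subgroup is $\cong\Z$), and $\vcd(K)=\operatorname{cd}(\Z[1/2])=2$, so $d=2$; and $\Z\in\lawFree_1$ with $w_L=[x,y]$. Taking $T=\{a,t\}$, one computes $W=\{1,a^{\pm 1/2}\}$ in additive $\Z[1/2]$-notation, so $U_0=\tfrac12\Z$ and $U_\ell = 2^{-(\ell+1)}\Z$ for all $\ell$. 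Every $U_\ell$ is free abelian of rank $1$, so the ranks repeat immediately, yet $U_d=\tfrac18\Z$ is \emph{not} normal in $\langle T\rangle=BS(1,2)$: conjugation by $t^{-1}$ strictly enlarges it, and the chain never stabilizes. (The theorem's conclusion is nonetheless true here, since $BS(1,2)$ is metabelian: the normal closure $U=\bigcup_\ell U_\ell=\Z[1/2]$ is abelian and one should run the law argument with $U$ rather than with $U_d$.) So the assertion you flag as the missing ``stabilization lemma'' is not merely unproven in the paper — it is false at the level of generality of the theorem's hypotheses, and filling the gap requires either replacing $U_d$ by the full normal closure and proving that $U_d$ abelian forces every $U_\ell$ abelian, or invoking extra structure on $K$ (isolators/root closures of abelian subgroups being abelian, as you suggest for commutative transitive groups, or the flat structure of a RAAG) beyond the bare largeness alternative plus finite $\vcd$. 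Your diagnosis is sharper than the paper's own treatment, even though, not having seen the proof, you charitably assumed the authors had such a lemma in hand.
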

	\begin{proof}
		Let $\vphi: E \to L$ be the be the third map in the short exact sequence.  
		Let $T \subset E$ be a finite collection.  
		As in the proof of \Cref{thm:twofree}, we may assume that $\vphi(\abrackets{T})$ satisfies the law $w_L$. Pairs of elements in $K$ either generate nonabelian free groups or a free abelian group because $K$ satisfies the largeness alternative, so we may further assume that $W := \{ w_L(t_1, \dotsc, t_m) \mid t_i \in T \}$ generates a free abelian subgroup of $\ker(\vphi) \cap \abrackets{T}$ with rank at least one.  
		
		As in the proof of \Cref{prop:hyperbolicExtension}, consider $U_{\ell}$ the subgroup of $K$ generated by all conjugates of $W$ by words with $T$-length at most $\ell$. If $U_d$ contains a pair of noncommuting elements then they generate a nonabelian subgroup free subgroup and $G \in \mathcal{L}^*_{2d + |w_L|}$ so we are done.  Otherwise, $U_d$ is a normal free abelian subgroup of $T$.
		
		Normality follows from having considered $(d+1)$ conjugates and the structure of subgroups of finitely generated free abelian groups.  
		Thus, $U_d \normal \abrackets{T}$ with quotient $\abrackets{T}/U_d = \vphi(\abrackets{T})$ satisfying $w_L$.  It follows from \Cref{law-by-law} that $\abrackets{T}$ satisfies the law\\ 
		$[w_L(x_1, \dotsc, x_m), w_L(y_1, \dotsc, y_m)]$ completing the proof.  
	\end{proof}
	
	The subgroup structure afforded by \Cref{thm:strongestTits} lets us characterize extensions of certain non-hyperbolic groups. 
	Recall that the right-angled Artin group associated to a finite simplicial graph has a presentation with one generator for each vertex and whose defining relations are commutators between generators associated with adjacent vertices.  
	
	\begin{coro}
		\label{RAAG_or_RFree_extensions}
		Let $K$ be either a right-angled Artin group or a residually free group.  Let $Q \in \lawEither$ be a group satisfying either quantitative law alternative.  
		Let $E$ be an extension fitting into the exact sequence $1 \to K \to E \to Q \to 1$.  
		Then $E \in \lawEither$.
	\end{coro}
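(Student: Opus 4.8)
The plan is to obtain \Cref{RAAG_or_RFree_extensions} as an essentially immediate consequence of \Cref{thm:strongestTits}. Since $Q \in \lawEither$ means precisely that $Q \in \mathcal{L}^*_N$ for some $N$ with some associated law $w_Q$, the only thing left to verify is that the kernel $K$ meets the two hypotheses that \Cref{thm:strongestTits} imposes on its kernel: namely, that $K$ satisfies the largeness alternative, and that $\vcd(K) \le d$ for some finite $d$. Granting both, \Cref{thm:strongestTits} yields $E \in \mathcal{L}^*_M$ with $M \ge 2d + |w_Q|$ and associated law $[w_Q(x_1,\dotsc,x_m),\, w_Q(y_1,\dotsc,y_m)]$, hence $E \in \lawEither$, as claimed.

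For the largeness alternative: right-angled Artin groups satisfy it by \cite{Baumdisch:2-generatedRaagSubgroups}, and residually free groups satisfy it by \cite[Lemma~1]{Baumslag:ResiduallyFree}, where one uses in addition that subgroups of residually free groups are again residually free and torsion-free, so that a finitely generated subgroup which is not large is finitely generated torsion-free abelian, i.e.\ free abelian. This is exactly the input recorded in the discussion preceding \Cref{thm:strongestTits} (``all such groups are in $\mathcal{L}^{\mathrm{Free}}_1$ with law $[a,b]$''). For finiteness of $\vcd$: a right-angled Artin group $A_\Gamma$ on a finite graph $\Gamma$ has cohomological dimension equal to the clique number of $\Gamma$, which is finite; and a finitely generated residually free group embeds into a direct product of finitely many limit groups, so, since limit groups have finite cohomological dimension and cohomological dimension is monotone under passing to subgroups and subadditive under finite direct products, such a group has finite $\vcd$. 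I would make explicit at this point the standing convention that ``right-angled Artin group'' refers to a finite defining graph and that ``residually free group'' is taken finitely generated; these are the cases in which finite $\vcd$ and the cited structure theory of residually free groups are actually available, and outside them the bound $\vcd(K)\le d$ — and hence uniformity of the constant in \Cref{thm:strongestTits} — can genuinely fail.

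The substance of \Cref{RAAG_or_RFree_extensions} is therefore carried entirely by \Cref{thm:strongestTits}, and the only ingredient here that is not purely formal is the finiteness of $\vcd$ for (finitely generated) residually free groups, which rests on their embedding into products of limit groups rather than on anything internal to this paper. The one point I would check most carefully is that no finiteness hypothesis on $G$ or on $G \cap K$ is silently being invoked: in \Cref{thm:strongestTits} the relevant subgroups $U_\ell \le K$ are generated by the finitely many $T$-conjugates of the finite set $W$ and so are automatically finitely generated — which is precisely the regime in which the largeness alternative applies — while the hypothesis $\vcd(K) \le d$ enters only as a uniform upper bound on the ranks of the free abelian groups $U_\ell$. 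With that confirmed, the deduction goes through verbatim in both the $\lawFree$ and the $\lawUEG$ variants, since \Cref{thm:strongestTits} already treats both at once.
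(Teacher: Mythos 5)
Your proposal is correct and takes essentially the same route as the paper, which proves the corollary by reducing to \Cref{thm:strongestTits} via the fact that RAAGs and residually free groups satisfy the largeness alternative (Antolin--Minasyan and Baumslag, respectively). You are in fact slightly more careful than the paper's one-line proof: you explicitly verify the finite-$\vcd$ hypothesis of \Cref{thm:strongestTits}, which the paper leaves unaddressed, and you correctly flag the standing finite-graph / finitely-generated conventions under which that bound actually holds.
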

	\begin{proof}
		Right-angled Artin groups and residually free groups satisfy the strongest Tits alternative by work of Antolin and Minasyan \cite{AntolinMinasyan} and Baumslag \cite[Lemma~1]{Baumslag:ResiduallyFree} respectively.
	\end{proof}

	A residually free group is a limit group (also called fully residually free) if and only if it is commutative transitive \cite{Baumslag:ResiduallyFree}.  Commutative transitivity provides the following significant improvement to \Cref{thm:strongestTits}.  
	
	\begin{theorem}
		\label{thm:ctgroups}
		Let $H$ be a limit group. 
		Let $L \in \lawFree_N$ be a group satisfying either quantitative law alternative. 
		Let $G$ be an extension group fitting into a short exact sequence $1 \to H \to G \stackrel{\phi}{\to} L \to 1$. 
		Then $G \in \lawFree_{M}$ where $M = \max\{ N, |w_L| + 2 \}$. 
	\end{theorem}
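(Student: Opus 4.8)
The plan is to run the common framework of the paper, with the \emph{CSA} property of limit groups—every nontrivial element lies in a unique maximal abelian subgroup, and maximal abelian subgroups are malnormal, hence self-normalising—playing the role that the bound $\vcd(K)\le d$ played in \Cref{thm:strongestTits}. This is exactly what lets one examine conjugates of a witnessing word by \emph{single} generators rather than by words of length up to $d$, and it is precisely the improvement from the constant $2d+|w_L|$ to the dimension-free $|w_L|+2$.

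First I would fix a finite collection $T\subseteq G$ and consider $\phi(\abrackets{T})\le L$. Since $L\in\lawFree_N$ with law $w_L$, either $\phi(\abrackets{T})$ contains an $N$-short free subgroup with respect to $\phi(T)$—and then \Cref{shortfreeinquotient} produces an $N$-short free subgroup of $\abrackets{T}$, so we are done as $N\le M$—or $\phi(\abrackets{T})$ satisfies $w_L$. Assume the latter and set $W:=\{\,w_L(t_1,\dots,t_m):t_i\in T\,\}\subseteq H\cap\abrackets{T}$ (each such word maps to $1$ in $L$, hence lies in $\ker\phi=H$), whose elements have $T$-length at most $|w_L|$. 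Two structural facts about the limit group $H$ are used: $H$ is residually free, so any $2$-generated nonabelian subgroup of $H$ is free of rank $2$ (by Baumslag's largeness alternative such a subgroup surjects a nonabelian free group, and a $2$-generated group surjecting a nonabelian free group is free since $F_2$ is Hopfian); and $H$ is CSA. One then reduces to the case that $\abrackets{W}$ is nontrivial and abelian: if two elements of $W$ fail to commute they already generate a $|w_L|$-short free subgroup of $\abrackets{T}$, and when $\abrackets{W}=1$ one argues as at the analogous point of \Cref{prop:hyperbolicExtension}—then $\abrackets{T}$ itself satisfies a law, since (for $w_L$ taken to be an outer commutator word) its verbal subgroup for $w_L$ is the normal closure of $W$, or equivalently because limit groups contain no nonabelian solvable subgroups.

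So suppose $\abrackets{W}$ is nontrivial and abelian, and let $A$ be the unique maximal abelian subgroup of $H$ containing it. The crux is the dichotomy: either $[\,twt^{-1},w'\,]\ne 1$ for some $t\in T$ and $w,w'\in W$, or not. In the first case $\abrackets{twt^{-1},w'}$ is a $2$-generated nonabelian subgroup of $H$, hence free of rank $2$; since $twt^{-1}$ has $T$-length at most $|w_L|+2$ and $w'$ has $T$-length at most $|w_L|$, this exhibits a $(|w_L|+2)$-short free subgroup of $\abrackets{T}$. In the second case every $twt^{-1}$ centralises the nontrivial elements of $\abrackets{W}\le A$, so, since in a CSA group the centraliser of a nontrivial element of $A$ equals $A$, we get $t\abrackets{W}t^{-1}\le A$ for all $t\in T$; as $A$ and $tAt^{-1}$ are both maximal abelian containing the nontrivial subgroup $t\abrackets{W}t^{-1}$, they coincide, so $tAt^{-1}=A$ for all $t\in T$. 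Hence every element of $\abrackets{T}$ normalises $A$, whence every element of the kernel $H\cap\abrackets{T}$ both lies in $H$ and normalises $A$; self-normality of $A$ in $H$ forces $H\cap\abrackets{T}\le A$, so $H\cap\abrackets{T}$ is abelian. Then $1\to H\cap\abrackets{T}\to\abrackets{T}\to\phi(\abrackets{T})\to 1$ has abelian kernel and a quotient satisfying $w_L$, so by \Cref{law-by-law} $\abrackets{T}$ satisfies the law $[\,x_1,x_2\,]\circ w_L=[\,w_L(x_1,\dots,x_m),\,w_L(x_{m+1},\dots,x_{2m})\,]$, which depends only on $L$. Taking this as $w_G$, the free subgroups produced always have $T$-length at most $\max\{N,|w_L|,|w_L|+2\}=M$, as claimed.

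The main obstacle is the structural step just made: recognising that the CSA property is exactly what is needed to force $H\cap\abrackets{T}$ to be abelian after inspecting conjugates by single generators only—in the broader largeness-alternative setting of \Cref{thm:strongestTits} one instead controls an ascending chain of free abelian subgroups whose length can only be bounded by $\vcd(K)$, which is why that theorem is weaker. A secondary point requiring care is the genuinely degenerate case $\abrackets{W}=1$, where one must actually produce a law satisfied by $\abrackets{T}$—cleanest if $w_L$ is of outer-commutator-word type, or via the absence of nonabelian solvable subgroups in limit groups—rather than merely observe that $w_L$ vanishes on the chosen generating tuple.
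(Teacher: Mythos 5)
Your proof follows essentially the same route as the paper's: reduce to the case $\phi(\langle T\rangle)$ satisfies $w_L$, form $W = \{w_L(t_1,\dots,t_m): t_i \in T\} \subset H$, use Baumslag's largeness alternative for the limit group $H$ to dichotomize, and then test a single round of conjugation by the generators $T$, using commutative transitivity / CSA to conclude. The points where you diverge are genuinely improvements in precision. The paper concludes by asserting that ``$A$ is the normal closure of the law $w_L$ in $K$'' and hence that $K/A'$ satisfies $w_L$, where $A'$ is the normal closure of $\langle W\rangle$; this is not quite right, since the normal closure of a law's values on a generating tuple need not equal the verbal subgroup. Your alternative -- use malnormality/self-normality of the maximal abelian subgroup $A$ in $H$ to force $H \cap \langle T\rangle \le A$, then apply \Cref{law-by-law} to $1 \to H \cap \langle T\rangle \to \langle T\rangle \to \phi(\langle T\rangle) \to 1$ -- is cleaner and goes through without that claim. (The paper proves the CT input as \Cref{lem:ctcents}; you appeal directly to the CSA structure of limit groups, which is a slightly stronger and cleaner hypothesis and yields the same normality of $A$.)

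One caution: your proposed handling of the degenerate case $\langle W\rangle = 1$ is not correct as stated. It is not true that for an outer commutator word the verbal subgroup equals the normal closure of the word's values on a generating set -- e.g.\ with $K = F(a,b)$, $T = \{a,b\}$, and $w_L = [[x_1,x_2],[x_3,x_4]]$, every substitution from $T$ yields a trivial element, so the normal closure of $W$ is trivial, while $K^{(2)} \ne 1$. (The paper does not address this case either, so you are no worse off; in that very example the theorem's \emph{conclusion} still holds since $\langle T\rangle$ already contains a $1$-short free subgroup, which suggests the right fix is to also examine short commutators $[t,t']$ of generators that happen to lie in $H$, rather than to appeal to outer commutator structure.) With that caveat, your argument matches the paper's in spirit and tightens its final step.
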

	
	We first need the following lemma on automorphisms of commutative transitive groups. 
	
	\begin{lemma}
		\label{lem:ctcents}
		Let $H$ be a commutative transitive group. 
		Let $\psi\in \Aut(H)$. 
		If $g\in H$ is such that $\langle \psi^k(g) \mid k\in \Z\rangle$ is non-abelian, then $\langle g, \psi(g)\rangle$ is non-abelian. 
	\end{lemma}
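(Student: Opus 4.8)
The plan is to argue by contradiction. Suppose $\abrackets{g, \psi(g)}$ is abelian, so that $[g,\psi(g)] = 1$. First I would record two trivial reductions: $g \neq 1$, since otherwise $\abrackets{\psi^k(g) \mid k \in \Z}$ would be trivial, contradicting the hypothesis that it is non-abelian; and, $\psi$ being an automorphism, $\psi^k(g) \neq 1$ for every $k \in \Z$. This is exactly what is needed for commutative transitivity to apply to all the elements that appear below, since that property says precisely that ``commuting with'' is an equivalence relation on $H \setminus \{1\}$, equivalently that the centralizer $C_H(x)$ of any nontrivial $x$ is abelian.

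Next I would apply powers of $\psi$ to the relation $[g,\psi(g)] = 1$ to get $[\psi^k(g), \psi^{k+1}(g)] = 1$ for every $k \in \Z$, i.e.\ a bi-infinite chain of consecutive commuting nontrivial elements passing through $g$. Walking along this chain in both directions and using commutative transitivity, I would show by induction that $[g, \psi^k(g)] = 1$ for all $k \in \Z$: if $[g,\psi^k(g)] = 1$, then since $[\psi^k(g), \psi^{k+1}(g)] = 1$ and $\psi^k(g) \neq 1$, transitivity gives $[g, \psi^{k+1}(g)] = 1$, and the negative direction is identical starting from $[\psi^{-1}(g), g] = 1$. Hence every $\psi^k(g)$ lies in $C_H(g)$, which is abelian, so $\abrackets{\psi^k(g) \mid k \in \Z} \le C_H(g)$ is abelian, contradicting the hypothesis.

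I do not expect a genuine obstacle here; the only point that needs care is ensuring that every element of the chain $\{\psi^k(g)\}_{k \in \Z}$ is nontrivial so that commutative transitivity is legitimately applicable, which is immediate once $g \neq 1$. This lemma is the commutative-transitive counterpart of \Cref{lem:ct}, and I anticipate it will play in the proof of \Cref{thm:ctgroups} the same role that \Cref{lem:ct} plays in \Cref{thm:twofree}: it forces the iterated-conjugation chain $U_0 < U_1 < \cdots$ built from $W = \{w_L(t_1,\dotsc,t_m)\}$ to stabilize already at the first step whenever it fails to produce a non-abelian (hence, by commutative transitivity of limit groups, free) two-generated subgroup, which is what yields the bound $M = \max\{N, |w_L|+2\}$.
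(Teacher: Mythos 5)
Your proof is correct and takes essentially the same approach as the paper's: apply powers of $\psi$ to the assumed relation $[g,\psi(g)]=1$ to get a chain of consecutively commuting elements, then use commutative transitivity to conclude the whole orbit lies in an abelian centralizer. You simply fill in the details (the nontriviality check on $g$ and the explicit induction along the chain) that the paper leaves implicit.
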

	\begin{proof}
		Suppose that $g$ and $\psi(g)$ commute, i.e. $[g, \psi(g)] = e$. 
		Then $[\psi^k(g), \psi^{k+1}(g)] = e$.
		Since $H$ is commutative transitive, we see that $\langle \psi^k(g)\mid k\in\Z\rangle$ is abelian as all the generators commute.
	\end{proof}
	
	We are now ready to prove \Cref{thm:ctgroups}.
	
	\begin{proof}
		Let $S\subset G$ and let $K = \langle S\rangle$. 
		Let $\phi\colon G\to L$ be the homomorphism from the short exact sequence. 
		Suppose that $\phi(K)$ does not satsify the law $w_L$. 
		Then $\{\phi(s)\mid s\in S\}$ generates an $N$-short free group.
		We can then pull this subgroup back to $G$ to get an $N$-short free subgroup of $G$. 
		
		Now suppose that $\phi(K)$ satisfies $W_L$.
		Let $W = \{w_L(S)\}$. 
		Since $\phi(K)$ satisfies $w_L$ we can see that $W \subseteq H$. 
		
		Since $H$ is a residually free group, every 2-generated subgroup is either abelian or free. 
		Moreover, $H$ is commutative transitive so either $\langle T\rangle$ is abelian or $W$ generates a 1-short free subgroup. 
		If $W$ contains a 1-short free subgroup, then $S$ contains a $|w_L|$-short free subgroup. 
		
		Thus we can assume that $A = \langle W\rangle$ is abelian. 
		Thus $A\leq C(w)$ for any $w\in W$ where $C(w)$ is the centraliser of $w$. 
		Now consider the conjugates $sWs^{-1}$ for $s\in S$. 
		Let $A_s = \langle W, sWs^{-1}\rangle$. 
		
		If each of the $A_s$ are abelian, then by commutative transitivity we see that $A_s\subset C(w)$ for all $s\in S$ and all $w\in W$. 
		Thus, by \Cref{lem:ctcents} we see that conjugation by $s$ preserves $C(w)$ for all $s\in S$.
		Thus the normal closure in $K$ of $A$ is an abelian subgroup $A'$. 
		Since $A$ is the normal closure of the law $w_L$ in $K$ we see that $K/A'$ satisfies $w_L$. 
		Thus $K$ satisfies a law. 
		
		If on the other hand one of the $A_s$ is not abelian, then $T\cup sWs^{-1}$ generates a 1-short free subgroup. 
		Thus, $S$ generates a $|w_L| + 2$-short free subgroup. 
	\end{proof}
	
	Our proofs of the quantitative law alternatives for automorphism groups rely on the following two results.  
	They synthesize well-known results in the literature.  We include the statements and proofs for completeness and ease of reading.
	
	\begin{proposition}
		\label{MCG:quantitative_alternative}
		The mapping class group of a finite-type surface satisfies the quantitative free subgroup--law alternative, that is, $MCG(\Sigma) \in \lawFree_N$ where $N$ and the associated law depends on the complexity of the surface.  
	\end{proposition}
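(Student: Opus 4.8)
\emph{Proof proposal.} The plan is to combine the Tits alternative for mapping class groups (Birman--Lubotzky--McCarthy, McCarthy, Ivanov) with the quantitative refinement of Mangahas \cite{Mangahas}. Fix a finite-type surface $\Sigma$ and let $r = r(\Sigma)$ be the maximal rank of an abelian subgroup of $\MCG(\Sigma)$; by Birman--Lubotzky--McCarthy this is finite and bounded in terms of the complexity of $\Sigma$. By the Tits alternative, every finitely generated subgroup $H \le \MCG(\Sigma)$ is either virtually abelian of rank at most $r$, or contains a nonabelian free subgroup, so it suffices to handle these two cases separately and check that both the bound $N$ and the law can be chosen depending only on the complexity of $\Sigma$.

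For the law side, I would first produce a single law $w_\Sigma$ satisfied by every virtually abelian subgroup. Since $\MCG(\Sigma)$ is virtually torsion free (Serre), fix a torsion-free finite-index subgroup $\Gamma \le \MCG(\Sigma)$ of index $m = m(\Sigma)$. If $H \le \MCG(\Sigma)$ is virtually abelian, then $H \cap \Gamma$ is torsion-free virtually abelian of rank at most $r$, hence the fundamental group of a compact flat manifold of dimension at most $r$; by Bieberbach's theorems such a group contains a normal free abelian subgroup of index at most some $D = D(r)$. Consequently $H$ contains a free abelian subgroup of index at most $k := Dm$, and passing to its normal core $A'$ one sees that $h^{k!} \in A'$ for every $h \in H$, so $H$ satisfies the law $w_\Sigma := [x^{k!}, y^{k!}]$. (This is the same elementary argument underlying \Cref{virtual_law}.)

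For the free-subgroup side, I would invoke Mangahas's theorem: there is a constant $N = N(\Sigma)$ such that whenever a finite subset $T \subset \MCG(\Sigma)$ generates a subgroup that is \emph{not} virtually abelian, some two elements that are words of $T$-length at most $N$ generate a nonabelian free subgroup. Applying this with $T$ ranging over all finite generating sets of a non-virtually-abelian $H$ shows that $H$ contains a uniformly $N$-short free subgroup. Combining the two cases, any finitely generated $H \le \MCG(\Sigma)$ either contains a uniformly $N$-short free subgroup or satisfies $w_\Sigma$, so $\MCG(\Sigma) \in \lawFree_N$ with $N = N(\Sigma)$ and $w_\Sigma$ depending only on the complexity of $\Sigma$, as claimed.

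The main obstacle is extracting the precise quantitative statement from \cite{Mangahas}: the main theorem there is phrased as a uniform positive lower bound on the entropy of non-virtually-abelian subgroups, and one must read off from the ping-pong construction that the witnesses can be taken to be a pair of (bounded-length products of) pseudo-Anosov classes generating an honest rank-two free subgroup, rather than merely a short free sub-semigroup; this is implicit in her argument but should be stated explicitly. The remaining ingredients --- the bound on abelian rank, virtual torsion-freeness, the Bieberbach bound, and the passage from ``bounded-index free abelian'' to ``satisfies $[x^{k!},y^{k!}]$'' --- are standard and routine.
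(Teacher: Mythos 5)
Your proposal is correct and follows the same overall structure as the paper's proof: the Ivanov--McCarthy Tits alternative splits finitely generated subgroups into virtually abelian ones and ones containing a free subgroup; Mangahas provides the $N$-short free subgroup in the second case; and the first case is handled by exhibiting a single law of bounded length. The one place where your route differs is in how you bound the index of a free abelian subgroup inside a virtually abelian subgroup: the paper simply cites Birman--Lubotzky--McCarthy (their Theorem~B), which already gives a bound $M = M(\Sigma)$ on the index of an abelian subgroup depending only on the complexity, and then applies \Cref{virtual_law} to obtain the law $[x,y]\circ z^{M!}$. You instead pass to a torsion-free finite-index subgroup $\Gamma$ (Serre) and then invoke Bieberbach's theorems to bound the holonomy of the Bieberbach group $H\cap\Gamma$. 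Both arguments are valid and yield a law depending only on $\Sigma$ --- your $[x^{k!},y^{k!}]$ versus the paper's $[x,y]\circ z^{M!}$ --- but the BLM citation is more direct and avoids the Bieberbach detour. Your closing caveat about reading a rank-two free subgroup (rather than merely a free sub-semigroup) out of Mangahas's ping-pong argument is a reasonable thing to flag, but this is indeed what her construction produces, so the paper's citation is unproblematic.
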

	
	\begin{proof}
		Finitely generated subgroups of the mapping class group of a finite type surface were shown to either be virtually abelian or contain a free subgroup by independent work of Ivanov \cite{Ivanov:Tits_Alternative} and McCarthy \cite[Theorem~A]{McCarthy:Tits_Alternative}.  
		In the first case, the index, $M$ of an abelian subgroup is bounded in terms of the complexity of the surface by work of Birman, Lubotzky, and McCarthy \cite[Theorem~B]{BirmanLubotzkyMcCarthy:Solvable_in_MCG}.  Such groups then satisfy the law $[x,y] \circ z^{M!}$ by \Cref{virtual_law}.  The exponentially growing subgroups were shown by work of Mangahas \cite{Mangahas} to contain $N$--short free subgroups where $N$ is a function of the complexity of the surface.  
		Thus, $\MCG(\Sigma) \in \lawFree_N$.
	\end{proof}
	
	\begin{proposition}
		\label{Linear:quantitative_alternative}
		The general linear group over an algebraically closed field $\mathbb{K}$ satisfies the quantitative free subgroup--law alternative, that is, $\GL(n, \mathbb{K}) \in \lawFree_N$ where $N = N(n)$ and the associated law also only depends on the dimension $n$.
	\end{proposition}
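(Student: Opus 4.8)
The plan is to combine the \emph{uniform} Tits alternative for linear groups with the structure theory of virtually solvable linear groups, playing the two horns of the alternative against one another, exactly as in the proof of \Cref{MCG:quantitative_alternative}.

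First I would dispatch the free-subgroup horn using Breuillard's uniform Tits alternative \cite{Breuillard} (refining Eskin--Mozes--Oh \cite{EMO}): there is a constant $N_0 = N_0(n)$, depending only on $n$, such that for every field $k$ and every finite symmetric subset $\genset \ni 1$ of $\GL(n,k)$, either $\abrackets{\genset}$ is virtually solvable or the set of words of length at most $N_0$ in $\genset$ contains two elements that generate a nonabelian free group. Given a finitely generated subgroup $H \le \GL(n,\mathbb{K})$ that is not virtually solvable and any finite generating set $\genset$ of $H$, I would apply this to $\genset \cup \genset^{-1} \cup \{1\}$ to produce two words of $\genset$-length at most $N_0$ generating a free subgroup. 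Since $\genset$ was arbitrary, $H$ contains a uniformly $N_0$-short free subgroup, which is the first horn of \Cref{quant_alternatives}.

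Next I would treat the law horn: a finitely generated virtually solvable subgroup $H \le \GL(n,\mathbb{K})$. Using that $\mathbb{K}$ is algebraically closed, I would invoke the classical structure theory of (virtually) solvable linear groups --- Lie--Kolchin together with Mal'cev's bound on the index of a triangularizable subgroup, and Jordan's theorem to control the finite part --- to obtain a constant $c = c(n)$, depending only on $n$, and a subgroup $H_0 \le H$ of index at most $c$ that is conjugate into the upper triangular matrices. Upper triangular subgroups of $\GL(n,\mathbb{K})$ are solvable of derived length at most $\lceil \log_2 n \rceil + 1$, so they satisfy the corresponding iterated commutator law $u_n$, which depends only on $n$; by \Cref{virtual_law}, $H$ then satisfies the law $w_n := u_n \circ x^{c!}$, again depending only on $n$. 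Taking $N := N_0(n)$ together with this $w_n$ yields $\GL(n,\mathbb{K}) \in \lawFree_N$ with associated law $w_n$.

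The step I expect to be the main obstacle is the uniform index bound $c(n)$ in the virtually solvable case: extracting a triangularizable subgroup of $n$-bounded index from an \emph{arbitrary} finitely generated virtually solvable subgroup is precisely where characteristic zero (via Jordan's theorem on finite linear groups) enters in an essential way, and handling it over all algebraically closed fields uniformly requires the finer control on solvable subgroups packaged inside the uniform Tits alternative machinery of \cite{Breuillard}. The remaining ingredients --- symmetrizing a generating set, bounding the derived length of a triangularizable group, and assembling the final law via \Cref{virtual_law} --- are routine.
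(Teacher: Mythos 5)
Your proposal is correct and follows essentially the same route as the paper: Breuillard's uniform Tits alternative handles the free-subgroup horn with a word-length bound $N_0(n)$, while Mal'cev-type structure theory (the paper cites Drutu--Kapovich, Theorem 14.72, for the bound $c(n)$ on index and derived length of virtually solvable linear subgroups) handles the law horn, with \Cref{virtual_law} assembling the final law $u_n \circ x^{c!}$. The only difference is that you flag the characteristic-dependence worry explicitly; the paper sidesteps it by citing a reference that gives the bounded index and derived length directly, but the overall structure of the argument is identical.
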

	
	\begin{proof}
		The Tits alternative for linear groups shows that finitely generated subgroups either are virtually solvable or contain a free subgroup.  Both the index $k$ and derived length $c$ of a solvable subgroup can be bounded above by a function of the dimension $n$ (see for example lecture notes of Drutu and Kapovich \cite[Theorem~14.72]{DrutuKapovich:GGTlectures}).  Such groups will satisfy a law of length bounded by $k!\cdot 4^c$ by \Cref{virtual_law}.  Work of Breuillard \cite[Theorem~1.1]{Breuillard} shows that non-virtually solvable subgroups will contain an $N$--short free subgroup where $N = N(n)$.  Thus, $\GL(n, \mathbb{K}) \in \lawFree_N$. 
		
	\end{proof}

	We now turn our attention to the automorphism group of a relatively hyperbolic group.
	Given a group $G$ that is hyperbolic relative to a collection of subgroups $\mathcal{P} = \{P_1, \dots, P_n\}$, we can study the {\em relative outer automorphism group} $\Out(G; \P)$, which is the collection of outer automorphisms preserving $\P$ up to conjugacy. 
	We will focus on the case that $\P$ is a collection of virtually abelian subgroups that are not virtually cyclic. 
	This includes the class of hyperbolic groups for which we can take $\P = \varnothing$, the class of limit groups and more generally groups acting on CAT(0) spaces with isolated flats. 
	In this case, the discussion surrounding Theorem 4.3 in work of Guirardel and Levitt \cite{GuirardelLevitt}, shows that $\Out(G; \P)$ has finite index in $\Out(G)$. 
	They obtain the following: 
	
	\begin{theorem}\cite[Theorem 4.3]{GuirardelLevitt}
		\label{GuirardelLevitt:one-ended_rel-hyp_exact-sequence}
		Let $G$ be hyperbolic relative to $\P = \{P_1, \dots, P_n\}$, with $P_i$ virtually Abelian, not virtually cyclic. 
		Assume that $G$ is one-ended.
		Then $\Out(G)$ has a finite index subgroup $Out^1(G)$ which fits into a short exact sequence
		
		\begin{center}
			\begin{tikzcd}
				1 \arrow[r] & 
				\mathcal{T} \arrow[r] & 
				\Out^1(G) \arrow[r] & 
				\prod\limits_{i=1}^p \MCG^0_T(\Sigma_i)\times \prod\limits_j \Out^0_T(P_j) \arrow[r] & 
				1
			\end{tikzcd}
		\end{center}
		where:
		\begin{itemize}
			\item $\mathcal{T}$ is the group of twists of the canonical elementary JSJ decomposition, thus $\mathcal{T}$ is a finitely generated virtually abelian group. 
			\item $\Sigma_1, \dots, \Sigma_p$ are 2-orbifolds occuring in flexible QH vertices $v$ of the JSJ decomposition. 
			In particular, the groups $\MCG^0_T(\Sigma_i)$ are all commensurable with subgroups of mapping class groups of surfaces. 
			\item $\Out^0_T(P_j)$ is a subgroup of $GL_n(\Z)$ for some $n$, depending on $j$.
		\end{itemize}
	\end{theorem}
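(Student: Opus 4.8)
The plan is to reduce the study of $\Out(G)$ to an action on a single canonical splitting. First I would invoke the canonical elementary JSJ decomposition of the one-ended relatively hyperbolic group $G$ relative to $\P$ --- concretely, the tree of cylinders $T$ of the JSJ deformation space over the family of \emph{elementary} subgroups (those that are virtually cyclic or conjugate into some $P_i$). Because $T$ is built functorially from the pair $(G,\P)$, every automorphism of $G$ acts simplicially on $T$, so $\Out(G)$ acts on the finite quotient graph of groups $\Gamma = G\backslash T$, preserving vertex and edge groups up to conjugacy. The action on the finite set of combinatorial data of $\Gamma$ (its underlying graph together with the labelling by isomorphism types of vertex/edge groups and the peripheral structure) has finite-index kernel; call it $\Out^1(G)$. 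By construction $\Out^1(G)$ fixes $\Gamma$, preserves each vertex group and edge group up to conjugacy, and preserves $\P$.

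Next I would restrict an element of $\Out^1(G)$ to the vertex groups. There are three kinds of vertices: flexible quadratically hanging vertices, whose group is the fundamental group of a compact $2$-orbifold $\Sigma_i$; peripheral vertices, whose group is a conjugate of some $P_j$, hence virtually $\Z^n$; and rigid vertices. The crucial input is that a rigid vertex group, together with its family of incident edge groups, has only finitely many outer automorphisms fixing each incident edge group --- a Bestvina--Paulin/Rips-machine argument: an infinite such family would yield a nontrivial isometric action of the vertex group on an $\R$-tree with elementary arc stabilizers, producing an elementary splitting finer than the JSJ and contradicting its maximality. So after passing to a further finite-index subgroup the rigid vertices contribute nothing, while a QH vertex contributes the mapping class group $\MCG^0_T(\Sigma_i)$ of its orbifold (fixing the boundary, whence the subscript $T$, and acting trivially on the homology of the vertex group relative to the edge groups, whence the superscript $0$) and a peripheral vertex contributes $\Out^0_T(P_j)$, which fixes the incident edge groups and is therefore commensurable with a subgroup of $\GL_n(\Z)$. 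Assembling these restriction maps yields a homomorphism $\Out^1(G) \to \prod_i \MCG^0_T(\Sigma_i)\times\prod_j\Out^0_T(P_j)$, and one checks surjectivity by realizing mapping-class and $\GL_n(\Z)$ elements of the prescribed type as automorphisms of $G$ supported on a single vertex and the identity elsewhere.

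Finally I would identify the kernel. An element of $\Out^1(G)$ that is inner on every vertex group can, after composing with an inner automorphism of $G$, be taken to be the identity on each vertex group; such an automorphism is precisely a product of Dehn twists along the edges of $\Gamma$, so the kernel is the group of twists $\mathcal{T}$. Since every edge group is elementary --- virtually cyclic or virtually $\Z^n$ --- the twist along a given edge ranges over a virtually abelian group; $\mathcal{T}$ is generated by finitely many such families and is itself finitely generated and virtually abelian. This produces the short exact sequence. The hard part is the rigidity of rigid vertices: turning ``no further elementary splitting'' into finiteness of the relative outer automorphism group, which is exactly where one-endedness and relative hyperbolicity enter, through acylindrical accessibility of the JSJ and the availability of the Rips machine over elementary subgroups.
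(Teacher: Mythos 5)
This theorem is imported verbatim from Guirardel--Levitt and the paper under review does not prove it, so there is no in-paper proof to compare against. Your sketch does, however, faithfully reconstruct the architecture of the Guirardel--Levitt argument: you use the canonical elementary JSJ tree of cylinders of $(G,\P)$, observe that its functoriality gives an $\Out(G)$-action on the quotient graph of groups, pass to the finite-index subgroup $\Out^1(G)$ acting trivially on the combinatorial data, and then analyze the restriction to vertex groups. The trichotomy into QH, peripheral, and rigid vertices, the Bestvina--Paulin/Rips-machine argument showing rigid vertices contribute only finitely many relative outer automorphisms, and the identification of the kernel of the restriction map with the group of twists $\mathcal{T}$ (virtually abelian because all edge groups are elementary) are all exactly the ingredients of the cited proof. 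Two points to be slightly careful about: the superscript $0$ in $\MCG^0_T$ and $\Out^0_T$ has a precise technical meaning in Guirardel--Levitt (a further finite-index condition tied to how the edge groups are marked), not quite the ``trivial on relative homology'' gloss you give, and the surjectivity onto the product requires that the decorated groups be defined so that vertex-supported automorphisms realize every element --- one needs to confirm the target is chosen to make that true rather than asserting it. Neither of these affects the overall shape of the argument, which is correct.
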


	\begin{coro}
		\label{cor:oneendedrelhyp}
		Let $G$ be hyperbolic relative to $\P = \{P_1, \dots, P_n\}$, with $P_i$ virtually abelian, not virtually cyclic. 
		Assume that $G$ is one-ended.
		Then $\Out(G)$ satisfies the quantitative free subgroup--law alternative.
	\end{coro}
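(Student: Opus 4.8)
The plan is to combine the structural description of $\Out(G)$ furnished by \Cref{GuirardelLevitt:one-ended_rel-hyp_exact-sequence} with the closure properties of $\lawFree$ established in \Cref{section:lemmas} together with the fact (\Cref{MCG:quantitative_alternative} and \Cref{Linear:quantitative_alternative}) that mapping class groups of finite-type surfaces and linear groups lie in $\lawFree$. Since $G$ is one-ended with virtually abelian, not virtually cyclic, peripheral subgroups, \Cref{GuirardelLevitt:one-ended_rel-hyp_exact-sequence} provides a finite-index subgroup $\Out^1(G) \le \Out(G)$ sitting in a short exact sequence
\[
1 \to \mathcal{T} \to \Out^1(G) \to \prod_{i=1}^p \MCG^0_T(\Sigma_i) \times \prod_j \Out^0_T(P_j) \to 1,
\]
where $\mathcal{T}$ is finitely generated virtually abelian and the quotient, which we call $Q$, is a finite direct product whose factors are either commensurable with subgroups of mapping class groups of finite-type surfaces or subgroups of $\GL_n(\Z)$ for various $n$. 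By \Cref{prop:commensurabilityinvariant} it suffices to prove that $\Out^1(G) \in \lawFree$.

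First I would check that each factor of $Q$ lies in $\lawFree$. The property of lying in $\lawFree$ passes to arbitrary subgroups directly from \Cref{quant_alternatives}, so the linear factors are in $\lawFree$ by \Cref{Linear:quantitative_alternative} (viewing $\GL_n(\Z) \le \GL_n(\mathbb{C})$), and the surface factors are in $\lawFree$ by \Cref{MCG:quantitative_alternative} combined with subgroup-closure and the commensurability invariance of \Cref{prop:commensurabilityinvariant}. Next I would record that $\lawFree$ is closed under finite direct products: given $A, B \in \lawFree_N$ with laws $w_A, w_B$ and a finitely generated subgroup $H \le A \times B$ with finite generating set $S$, either both projections $\pi_A(H), \pi_B(H)$ satisfy their laws — in which case $H$ satisfies a composite of $w_A$ and $w_B$ by applying \Cref{law-by-law} to the evident split short exact sequence $1 \to (\{e\}\times\pi_B(H)) \to \pi_A(H)\times\pi_B(H) \to \pi_A(H) \to 1$ — or, say, $\pi_A(H)$ fails $w_A$, so $\pi_A(H)$ contains an $N$-short free subgroup with respect to $\pi_A(S)$ and hence $S$ generates an $N$-short free subgroup by \Cref{shortfreeinquotient}. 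Ranging over all generating sets of $H$ and inducting on the (finitely many) factors yields $Q \in \lawFree$.

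Finally, $\mathcal{T}$ is virtually abelian, hence satisfies a law by \Cref{virtual_law}, so $\Out^1(G) \in \lawFree$ if and only if $Q \in \lawFree$ by \Cref{law_alternative:abelian_kernel}; combining this with the previous paragraph and one more application of \Cref{prop:commensurabilityinvariant} to pass back from $\Out^1(G)$ to $\Out(G)$ completes the proof. I expect the only point requiring genuine care to be the product-closure step for $\lawFree$ — in particular keeping precise track of the word ``uniformly'' in ``uniformly $N$-short'' as one passes through projections and invokes \Cref{shortfreeinquotient} — since every remaining step is a direct citation of an earlier result.
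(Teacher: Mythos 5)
Your proposal is correct and follows essentially the same approach as the paper, which also applies \Cref{GuirardelLevitt:one-ended_rel-hyp_exact-sequence} together with \Cref{law_alternative:abelian_kernel}, \Cref{MCG:quantitative_alternative}, and \Cref{Linear:quantitative_alternative}. Your write-up is actually somewhat more careful than the paper's: you explicitly invoke \Cref{prop:commensurabilityinvariant} to pass from $\Out^1(G)$ back to $\Out(G)$ and spell out closure of $\lawFree$ under finite direct products, both of which the paper leaves implicit.
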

	\begin{proof}
		By \Cref{law_alternative:abelian_kernel}, it suffices to note that the quantitative free subgroup--law alternative is satisfied by both $\MCG(\Sigma_i)$ and $\Out(P_j)$.
		Each $\MCG(\Sigma_i)$ satisfies a quantitative law alternative because it is commensurable with a subgroup of the mapping class group of a finite type surface and \Cref{MCG:quantitative_alternative}.  
		Each $\Out(P_j)$ is a subgroup of $\GL(n, \Z)$, so satisfies a quantitative law alternative by \Cref{Linear:quantitative_alternative}. 
	\end{proof}
	
	\begin{theorem}
		\label{relhyp_Aut:alternative}
		Let $G$ be a torsion-free group that is hyperbolic relative to a family of 
		free
		abelian subgroups $\P = \{ P_1, \dotsc, P_n \}$.  If $G$ is one-ended, then $\Aut(G)$ satisfies the quantitative UEG--law alternative.
	\end{theorem}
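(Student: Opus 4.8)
The plan is to run the extension machinery of \Cref{section:hyperbolic_extensions} on the short exact sequence $1 \to \Inn(G) \to \Aut(G) \to \Out(G) \to 1$, using that the quotient already lies in $\lawFree$. First I would dispose of the degenerate case: a torsion-free, one-ended group that is elementary relative to $\P$ is isomorphic to some $\Z^n$ with $n \ge 2$, and then $\Aut(G) = \GL(n,\Z)$ is a subgroup of $\GL(n,\mathbb{C})$, hence lies in $\lawFree$ (and so in $\lawUEG$) by \Cref{Linear:quantitative_alternative}. So assume $G$ is non-elementary. The centralizer of an infinite-order element of a non-elementary relatively hyperbolic group is elementary, so $Z(G) = 1$ and $\Inn(G) \cong G$. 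Discarding from $\P$ the peripherals of rank $\le 1$ leaves a family $\P'$ of free abelian groups of rank $\ge 2$ relative to which $G$ is still one-ended hyperbolic, so \Cref{cor:oneendedrelhyp} gives $\Out(G) \in \lawFree_N$ for some $N$, with associated law $w_L$. If $\P' = \varnothing$ then $G$, hence $\Inn(G) \cong G$, is word-hyperbolic and \Cref{prop:hyperbolicExtension} finishes the proof directly. The remaining task is therefore a relatively hyperbolic analogue of \Cref{prop:hyperbolicExtension}: if $H$ is a torsion-free group hyperbolic relative to free abelian subgroups and $1 \to H \to E \stackrel{\vphi}{\to} L \to 1$ with $L \in \lawEither_N$, then $E \in \lawUEG$.

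For this analogue I would follow the proof of \Cref{prop:hyperbolicExtension} almost verbatim, with \Cref{hyperbolic_iterated_automorphism} replaced by a relatively hyperbolic version. Fixing a finitely generated $\abrackets{T} \le E$ (and arguing, as there, for an arbitrary finite generating set $T$, since satisfying a law is generating-set independent), \Cref{shortfreeinquotient} and \Cref{uniformexpgrowthquotient} let us assume $\vphi(\abrackets{T})$ satisfies $w_L$; then $W := \{ w_L(t_1, \dotsc, t_m) \mid t_i \in T\}$ lies in $\ker(\vphi) \cap \abrackets{T} \le H$, and we iterate conjugation by $T$ to form the ascending chain $U_\ell$. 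The peripheral hypotheses make the relevant structure of $H$ extremely rigid: $H$ has no nontrivial finite subgroups; every elementary subgroup of $H$ is abelian (a two-ended subgroup is infinite cyclic, and a parabolic subgroup sits inside a conjugate of a free abelian peripheral); and $H$ is commutative transitive, so every nontrivial abelian subgroup lies in a \emph{unique} maximal abelian subgroup. Hence the iteration collapses at once: either $\abrackets{W}$ or $U_1 = \abrackets{W, T W T^{-1}}$ is non-abelian --- hence non-elementary, and we appeal to the uniform alternative below --- or both are abelian, in which case the unique maximal abelian subgroup $M \supseteq \abrackets{W}$ also contains every $t\abrackets{W}t^{-1}$ and so equals every $t M t^{-1}$; thus $M$ is normalized by $\abrackets{T}$, its intersection $U$ with $\abrackets{T}$ is an abelian normal subgroup of $\abrackets{T}$ containing the $w_L$-values and their conjugates, and exactly as in \Cref{prop:hyperbolicExtension} two applications of \Cref{law-by-law} produce a law for $\abrackets{T}$ of length bounded in terms of $N$ and $|w_L|$.

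The remaining, and principal, obstacle is the non-elementary case: I need a uniform Tits alternative for torsion-free groups hyperbolic relative to free abelian subgroups --- the analogue of \Cref{thm:hyperbolicTitsAlternative} --- asserting that a finitely generated non-elementary subgroup $K \le H$ contains a two-generator free subsemigroup that is $D$-short with respect to \emph{every} finite generating set of $K$, with $D = D(H)$ depending only on $H$. Granting this, the non-elementary subgroup $U_\ell$ above is generated by $W$, whose elements have $T$-length at most $|w_L|$, so $\abrackets{T}$ has entropy bounded below by a constant depending only on $H$ and $|w_L|$, uniformly over generating sets. Proving the uniform alternative is where genuine relatively hyperbolic geometry is needed rather than the bookkeeping of \Cref{section:lemmas}: one works with the acylindrical action of $H$ on its coned-off Cayley graph, produces loxodromic elements of uniformly bounded word length, and plays ping-pong, the free abelian peripherals entering both to guarantee that ``elementary'' coincides with ``abelian'' (so nothing falls between the two alternatives) and to ensure the peripheral subgroups contribute no growth that must be separately controlled; this is the relative counterpart of Arzhantseva--Lysenok and Koubi. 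Assembling the two cases, and running the argument for every finite generating set of every finitely generated subgroup, shows that $E$ --- and therefore $\Aut(G)$ --- satisfies the quantitative UEG--law alternative.
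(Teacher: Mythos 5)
Your proposal is correct and follows the same basic strategy as the paper: reduce to the case that the image of a subgroup in $\Out(G)$ satisfies the law $w_{Out}$ supplied by \Cref{cor:oneendedrelhyp}, form the set $W$ of $w_{Out}$-values inside $\Inn(G) \cong G$, iterate conjugation to build the ascending chain $U_\ell$, and split according to whether the chain quickly becomes non-elementary (in which case a uniform alternative for the relatively hyperbolic $G$ gives short free subgroups) or stabilizes as an elementary normal subgroup (in which case \Cref{law-by-law} produces a law). The places you diverge from the paper are organizational and they mostly tighten the argument. First, you separate out the degenerate case $G = \Z^n$ and the rank-$\le 1$ peripherals explicitly before invoking \Cref{cor:oneendedrelhyp}; the paper leaves this implicit, so your treatment is more careful. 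Second, you run the case analysis through commutative transitivity of a torsion-free relatively hyperbolic group with abelian peripherals (elementary $\Leftrightarrow$ abelian, and nontrivial abelian subgroups lie in a unique maximal abelian subgroup), which collapses the iteration after $U_1$; the paper instead case-splits on whether $U_0$ is parabolic or loxodromic and whether $T$ preserves the relevant peripheral or axis, which requires following the chain to $U_2$. Both routes work, but your CT argument is slightly more economical and avoids the separate parabolic/loxodromic bookkeeping. Third --- and this is the only substantive thing you leave open --- the ``uniform alternative for torsion-free groups hyperbolic relative to free abelian subgroups'' that you sketch at the end is precisely the paper's \Cref{relhyp:alternative_LUEG}, which the paper proves (in summary form) by adapting Xie's argument on the cusped space: producing a uniformly short loxodromic via the compact fundamental domain and playing ping-pong. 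Your sketch of this ingredient (acylindrical action, short loxodromics, ping-pong, peripherals contributing no extra growth) is the right idea, so the proof assembles once that lemma is in hand.
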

	
	Before proceeding we will need the following variation of a result \cite[Lemma~5.3]{Xie}. 
	This stronger result is not explicitly stated in \cite{Xie}, but follows immediately from Xie's proof. We include a proof summary for completeness, to highlight that cocompactness is only needed to exhibit a particular isometry of the cusp space, and to establish notation and a setting that will be used to prove \Cref{relhyp_Aut:alternative}.
	
	\begin{proposition}
		\label{relhyp:alternative_LUEG}
		Let $G$ be a torsion-free group that is hyperbolic relative to a family of subgroups each of which satisfies a quantitative free subgroup--law alternative (has locally uniform exponential growth).  
		Then $G$ also satisfies a quantitative free subgroup--law alternative (has locally uniform exponential growth).
	\end{proposition}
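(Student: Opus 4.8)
The plan is to follow Xie's proof of \cite[Lemma~5.3]{Xie} essentially line for line, keeping track of all constants and isolating the single point at which cocompactness is used. First I would fix a finite relative generating set for $G$ and form the associated $\delta$-hyperbolic cusped space $X$ (the combinatorial horoball construction), on which $G$ acts properly with $\bdry X$ the Bowditch boundary; the hyperbolicity constant $\delta$, the acylindricity constants of the relatively hyperbolic structure, and the finitely many peripheral constants $N(P_1),\dots,N(P_n)$ and laws $w_{P_1},\dots,w_{P_n}$ all depend only on $G$ together with this data. Given a finitely generated $H = \abrackets{S} \le G$, I would note at the outset that each of the three relevant conclusions --- that $H$ contains a uniformly $N$-short free subgroup, that $H$ satisfies a group law, and that $H$ has entropy $\growth(H)$ bounded below --- is an invariant of the isomorphism type of $H$; for the first this uses that ``uniformly short'' is quantified over \emph{every} generating set, so an isomorphism carries an $N$-short free basis to an $N$-short free basis. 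Hence it suffices to analyse $H$ only up to conjugacy in $G$.

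I would then carry out the usual trichotomy for the action of $H$ on $X$. In the first case $H$ contains an element acting loxodromically on $X$. If $H$ fixes a pair of boundary points, its stabiliser is virtually cyclic, so torsion-freeness forces $H \cong \Z$ or $H = 1$, and $H$ satisfies $[a,b]$ and has polynomial growth. Otherwise $H$ contains two independent loxodromic isometries, and a uniform ping-pong argument --- bounding the powers needed only in terms of $\delta$ and the acylindricity constants --- produces two words in $S$ of length at most some $N_1 = N_1(G)$ that freely generate a rank-two subgroup; since $S$ was arbitrary this gives a uniformly $N_1$-short free subgroup of $H$, and in particular $\growth(H) \ge \frac{\log(3)}{N_1}$.

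In the remaining case no element of $H$ acts loxodromically on $X$, so $H$ either has bounded orbits in $X$ or fixes a unique point $\xi \in \bdry X$. If $\xi$ is a conical limit point then some element of $H$ is loxodromic with an axis ending at $\xi$ and we are back in the first case; otherwise $\xi$ is a bounded parabolic point, and by the structure of the Bowditch boundary $H$ lies in a conjugate of some peripheral $P_i$. Likewise a bounded orbit forces a nontrivial $H$ into a conjugate of some $P_i$. Conjugating into $P_i$ and applying the hypothesis on $P_i$: either that conjugate of $H$ satisfies $w_{P_i}$, whence so does $H$, or it contains a uniformly $N(P_i)$-short free subgroup (respectively has entropy at least $\frac{\log(3)}{N(P_i)}$), and by the isomorphism-invariance above the same holds for $H$ relative to $S$. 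To package the result I would set $N(G) = \max\{N_1(G), N(P_1), \dots, N(P_n)\}$ and, for a single law for $G$, take any nontrivial word $w_G$ in the intersection of the verbal subgroups of $\F_\infty$ generated by $[a,b]$ and by the $w_{P_i}$; this intersection is nontrivial because two nontrivial normal subgroups of a free group always meet nontrivially, and $w_G$ is a consequence of each of these laws, so satisfying any one of them implies satisfying $w_G$. (In the applications the peripherals are free abelian and one simply takes $w_G = [a,b]$.) The locally uniform exponential growth version is the identical case analysis with ``short free subgroup'' replaced by ``$\growth \ge c$'', using isomorphism-invariance of entropy in the peripheral case.

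The step I expect to be the main obstacle --- and the reason this is not immediate from \cite{Xie} --- is the first case: Xie invokes cocompactness of the $G$-action to exhibit a particular isometry of the cusp space with which to run the ping-pong. The work is to produce this isometry directly from the combinatorial geometry of the horoballs, where a suitably ``deep'' displacement that behaves like a loxodromic element is visible with no cocompactness hypothesis, and then to check that the ping-pong estimates, and hence the bound $N_1(G)$, still depend only on $\delta$ and the acylindricity constants. Granting that, the boundary dynamics and the reduction to the peripherals go through exactly as in \cite{Xie}, so I would present this as a proof summary and refer there for the remaining details.
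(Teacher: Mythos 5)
Your trichotomy of the $H$-action on the cusp space, your reduction to the peripherals in the elliptic/parabolic case, and your two side observations --- conjugacy reduction via isomorphism-invariance of the three conclusions, and extracting a single law $w_G$ from a nontrivial element of the intersection of verbal subgroups determined by $[a,b]$ and the $w_{P_i}$ --- are all sound, and the last one in fact patches a point the paper leaves implicit. But the step you single out as the main obstacle is left as a genuine gap in your write-up, and the route you sketch for filling it (produce the loxodromic ``directly from the combinatorial geometry of the horoballs \dots with no cocompactness hypothesis'') rests on a misreading of where cocompactness lives in this setting.

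Cocompactness is not an obstacle to engineer around; it is already available where it is needed. Let $Y$ be the cusped space $X$ with the open horoballs deleted. By Bowditch's Proposition~6.13, $G$ acts properly \emph{and cocompactly} on $Y$, so one may fix a compact fundamental domain $\Omega \subset Y$ of diameter $a = a(G,S)$, and by properness the number $m$ of $g \in G$ with $g\Omega$ meeting the $(100\delta+2a)$-neighbourhood of $\Omega$ is finite and depends only on $G$ and $S$. Under the standing assumption that $\abrackets{W}$ is not conjugate into a peripheral (the complementary case being the one you already handle), and using torsion-freeness so that $\abrackets{W}$ is infinite once nontrivial, a pigeonhole argument over the $W$-ball of radius $m$ produces $h_0 \in \abrackets{W}$ of $W$-length at most $m$ displacing the basepoint by more than $100\delta+2a$; the $200\delta$-separation of the horoballs then upgrades this to large displacement at every point of $X$. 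Koubi's and Xie's propositions convert this displacement into a loxodromic $h_1$ of $W$-length at most $m+1$, a uniform power $h_2 = h_1^k$ displacing every point of $Y$ by at least $200\delta$, and a ping-pong partner $wh_2w^{-1}$ for some $w \in W$ unless $W$ fixes the endpoints of the axis of $h_2$, in which case $\abrackets{W}$ is infinite cyclic. All constants here depend only on $G$ and the fixed generating set $S$, never on the subgroup. So the gap is real, but the way to close it is to exhibit the cocompactness of $G$ on the truncated cusped space $Y$ --- not to avoid cocompactness --- and that is exactly what the paper's proof summary does.
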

	
	\begin{proof}[Proof summary]
		Let $S$ be a fixed generating set for $G$. 
		Let $W \subset G$ be an arbitrary finite collection of elements.  
		We assume that $W$ generates a subgroup $H$ that is not conjugate into the peripherals $\P$ or else we are done by hypothesis.  
		
		There exists a proper hyperbolic geodesic metric space $X$ and a $G$-invariant subset $Y \subset X$ such that the complement $X \smallsetminus Y =: \ccB$ is a collection of $200 \delta$--separated horoballs (see \cite[Section~3.2]{Hruska:relhyp_relQC} for a discussion on cusp uniform actions and horoballs as well as \cite[Section~5]{Bowditch:relhyp}).
		By work of Bowditch  \cite[Proposition~6.13]{Bowditch:relhyp}, 
		$G$ acts properly and cocompactly on $Y$.
		For concreteness, we can 
		and will 
		use a construction of Groves and Manning \cite[Definition~3.12]{GrovesManning} to
		assume that such a space $X$ is obtained from the Cayley graph $\Cay(G, S)$ by equivariantly attaching combinatorial horoballs to conjugates of the peripheral subgroups in $\P$.  
		The corresponding subspace $Y$ can be obtained by equivariantly truncating the combinatorial horoballs at an appropriate height.
		
		It will suffice to exhibit a uniformly (independent of choice of $W$) short word in $H$ that moves every point of $X$ at least $100\delta$.  
		Let $\Omega \subset Y$ be a compact fundamental domain for the action of $G$ on $Y$.  Let $a := diam(\Omega)$.  By properness of $Y$ and the action of $G$ on $Y$
		\[
		m := \left | \{ g \in G \mid g\Omega \cap \text{N}_{100\delta + 2a}(\Omega) \neq \varnothing \} \right |
		\]
		is finite.
		Fix a basepoint $p \in \Omega$.
		By choice of $m$ there exists a word $h_0 \in H$ with $W$-length at most $m$ that moves $p$ more than $100\delta+2a$.  
		The word $h_0$ necessarily moves every point of $X$ the desired amount.  
		Indeed, suppose there exists a point $x \in X$ such that every element of $W$ moves $x$ less than $100\delta$.  
		By $200\delta$--separation, $x$ cannot be contained in a horoball or else $H$ would fix the horoball and be conjugate into a peripheral subgroup.  
		Hence, $x \in Y$.  There is an element of $g \in G$ such that $g(x) \in \Omega$.  
		By the triangle inequality and the choice of $h_0$, we have that $d_X(g(x), h_0g(x)) \geq 100\delta$.
		
		By work of Xie \cite[Proposition~4.1]{Xie} building upon that of Koubi \cite[Proposition~3.2]{Koubi}, the existence of such an element guarantees existence of a loxodromic $h_1 \in \Isom(X)$ with $W$-length at most $m+1$.
		\cite[Lemma~3.1]{Xie} gives a uniform constant depending $k$ only on hyperbolicity of $X$ and the separation of horoballs such that the power $h_2 = h_1^k$ has displaces every point of $Y$ by at least $200\delta$.
		
		If some element of $W$ conjugates $h_2$ to an independent loxodromic $wh_2w^{-1}$, and \cite[Section~4]{Xie} demonstrates that $\abrackets{w, wh_2w^{-1}}$ is a free group, whence $H$ has uniform exponential growth. 
		Otherwise, $W$ stabilizes the endpoints of an axis of $h_2$ in the Bowditch boundary then $H$ is virtually cyclic hence linearly growing.
		Since by assumption $G$ is torsion free, virtually cyclic subgroups are infinite cyclic.  Thus, $H$ satisfies the commutator law.
	\end{proof}

	\begin{proof}[Proof of \Cref{relhyp_Aut:alternative}]
		Let $d$ be the maximum rank of the free abelian subgroups $P_i$ . 
		
		Let $T \subset \Aut(G)$ be an arbitrary finite collection of automorphisms.
		As in \Cref{prop:hyperbolicExtension}, we may assume that the image in $\Out(G)$ satisfies the law $w_{Out}$ provided by \Cref{cor:oneendedrelhyp}.
		Take 
		\[
		W := \{ w_{Out}(t_1, \dotsc, t_m) \mid t_i \in T \} \subset \abrackets{T} \cap \ker(\Aut(G) \to \Out(G)).
		\]
		Let $W_k := \bigcup_{j = -k}^k T^j(W)$.  Note that $W_k < \Inn(G)$ and because $G$ is relatively hyperbolic and torsion free, $\Inn(G) \cong G$.  The collection $W_k$ each generate an ascending chain of subgroups $\{U_k := \abrackets{W_k}\}_{k = 0}^\infty$ such that the union $U = \bigcup_{k = 0}^\infty U_k$ is the normal closure $\llangle W \rrangle$ in $\abrackets{T}$.
		As before, we are done if either there exists a uniform bound on $k$ such that $U_k$ is non-elementary or $\llangle W \rrangle$ satisfies a law.
		
		Assume that $U_0$ is elementary.
		Under this assumption, $U_0$ is either contained in some conjugate $P^g$ of one of the peripheral subgroups of $G$, or $U_0$ acts loxodromically on the cusp space $X$ described in \Cref{relhyp:alternative_LUEG}.  Because $G$ is assumed to be torsion free $U_0 \cong \Z$.
		
		Suppose for now that $U_0 \leq P^g$.
		If $P^g$ is fixed by all the elements of $T$ then any subgroup of $P^g$ remains inside of this peripheral subgroup and thus $U$ is free abelian and satisfies a law.  
		Thus, $\abrackets{T}$ satisfies a law by \Cref{law-by-law}.
		If on the other hand there exists $t \in T$ such that $t(P^g) \neq P$ then  $U_1$ is not contained in a conjugate of any peripheral subgroup. 
		Indeed, peripheral subgroups necessarily have finite intersection, but $U_0 \leq P^g$ is infinite since $G$ was assumed to be torsion free.

		It remains only to consider when $U_1$ elementary and not contained in a peripheral.  In particular, $U_1 \cong \Z$ acts loxodromically on $X$.  If the elements of $T$ all fix the endpoints $\bdry_\infty U_1 \subset \bdry_\infty X$, where $\bdry_\infty$ denotes the visual boundary, then $U = U_1 \normal \abrackets{T}$ and $\abrackets{T}$ satisfies a law by \Cref{law-by-law}. 
		Finally, if $T$ does not stabilize $\bdry_\infty U_1$, then $U_2$ is non-elementary, so contains an $N$--short free subgroup by \Cref{relhyp:alternative_LUEG} where $N = N(G)$.  
		In particular, this short free subgroup witnesses that $\abrackets{T}$ has uniform exponential growth.
		
	\end{proof}

	When $G$ is hyperbolic and torsion free the situation improves significantly because it can be shown that virtually solvable subgroups of $\Aut(G)$ are virtually nilpotent. 
	
	\begin{theorem}
		\label{oneEndedHyperbolic_lawAlternative}
		Let $G$ be a torsion-free one-ended hyperbolic group. 
		Then $\Aut(G)$ has locally uniform exponential growth.
	\end{theorem}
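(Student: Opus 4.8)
The plan is to combine \Cref{relhyp_Aut:alternative} with a structural analysis of the law-satisfying subgroups of $\Aut(G)$. A torsion-free one-ended hyperbolic group is in particular hyperbolic relative to the empty family of free abelian subgroups, so \Cref{relhyp_Aut:alternative} supplies a constant $N$ and a law $w_L$ such that every finitely generated $H \le \Aut(G)$ either has entropy $\growth(H) \ge \log(3)/N$ or satisfies $w_L$. Consequently, to prove locally uniform exponential growth with the constant $c = \log(3)/N$ it is enough to show that every finitely generated $H \le \Aut(G)$ satisfying $w_L$ is virtually nilpotent, hence of polynomial growth and in particular not exponentially growing. This is the assertion that virtually solvable subgroups of $\Aut(G)$ are virtually nilpotent, applied to the (virtually solvable) law-satisfying subgroups.

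Fix such an $H$. Using the short exact sequence $1 \to \Inn(G) \to \Aut(G) \to \Out(G) \to 1$ and the fact that $G$ is one-ended (so not virtually cyclic, hence centerless, so $\Inn(G) \cong G$ is again torsion-free hyperbolic), the intersection $H \cap \Inn(G)$ satisfies $w_L$ and so is an elementary subgroup of $G$, i.e. trivial or infinite cyclic. Since a central extension of a virtually nilpotent group by $\Z$ is virtually nilpotent, and since $H$ acts on the cyclic group $H \cap \Inn(G)$ through $\{\pm 1\}$ (so an index $\le 2$ subgroup of $H$ centralizes it), it suffices to show that the image $\bar H \le \Out(G)$ is virtually nilpotent.

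For $\bar H$ I would pass to the finite-index subgroup $\Out^1(G)$ of \Cref{GuirardelLevitt:one-ended_rel-hyp_exact-sequence}; with $\P = \varnothing$ that sequence becomes $1 \to \mathcal{T} \to \Out^1(G) \to \prod_i \MCG^0_T(\Sigma_i) \to 1$, with $\mathcal{T}$ finitely generated (virtually) abelian and each $\MCG^0_T(\Sigma_i)$ commensurable with a subgroup of the mapping class group of a finite-type surface — crucially, there are no $\GL_n(\Z)$-factors, since those come only from peripheral subgroups. After replacing $\bar H$ by its intersection with $\Out^1(G)$, its image $J$ in $\prod_i \MCG^0_T(\Sigma_i)$ satisfies $w_L$; a finitely generated subgroup of a mapping class group satisfying a law contains no nonabelian free subgroup, hence is virtually abelian by the Tits alternative of Ivanov and McCarthy together with the Birman--Lubotzky--McCarthy bound on solvable subgroups, and virtual abelianness is a commensurability invariant, so $J$ is virtually abelian. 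On the other hand $\bar H \cap \mathcal{T}$, being a subgroup of a finitely generated virtually abelian group, is finitely generated virtually abelian. Finally, the conjugation action of $\Out^1(G)$ on $\mathcal{T}$ factors through a finite group: an element of $\Out^1(G)$ fixes every edge of the cyclic JSJ graph and can act only by $\pm 1$ on the corresponding maximal-cyclic centralizer summand of $\mathcal{T}$. Passing to a further finite-index subgroup of $\bar H$ on which $J$ is abelian and acts trivially on $\bar H \cap \mathcal{T}$ then exhibits $\bar H$ as a central extension of an abelian group by an abelian group, so nilpotent of class $\le 2$, which is what we needed.

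The main obstacle is the last point: that the twist group $\mathcal{T}$ is a finite $\Out^1(G)$-module. Without it one could only conclude that $\bar H$ is virtually polycyclic, and a virtually polycyclic group that is not virtually nilpotent — such as $\Z^2 \rtimes_A \Z$ for an Anosov matrix $A$ — has exponential growth, which would be incompatible with locally uniform exponential growth. This is also precisely where the hyperbolic (rather than merely toral relatively hyperbolic) hypothesis enters: the $\GL_n(\Z)$-factors attached to abelian peripherals do contain such polycyclic subgroups, whereas for $\P = \varnothing$ only the mapping class group factors remain, and there polycyclic subgroups are automatically virtually abelian.
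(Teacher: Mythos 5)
Your proposal follows essentially the same skeleton as the paper's proof: reduce from $\Aut(G)$ to $\Out(G)$, pass to the finite-index subgroup $\Out^1(G)$ of \Cref{GuirardelLevitt:one-ended_rel-hyp_exact-sequence} (with $\P=\varnothing$ so only MCG-factors appear), use \Cref{MCG:quantitative_alternative} and Birman--Lubotzky--McCarthy to conclude that the image of a law-satisfying subgroup is virtually abelian, intersect with the twist group $\mathcal{T}$, and then argue that the resulting extension is (virtually) central so the subgroup is virtually nilpotent. The reduction from $\Aut(G)$ to $\Out(G)$ is done by hand in your write-up (via $H\cap\Inn(G)$ being trivial or infinite cyclic and centralised on an index-$2$ subgroup), whereas the paper packages this step into \Cref{Aut from Out: hyperbolic groups} together with \Cref{prop:hyperbolicExtension}; both are fine.

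The one genuine gap is precisely the point you flag as ``the main obstacle.'' Your argument that the conjugation action of $\Out^1(G)$ on $\mathcal{T}$ factors through a finite group is only a sketch: you assert that every element of $\Out^1(G)$ fixes every edge of the JSJ graph and acts by $\pm 1$ on each cyclic twist factor, but this needs a careful justification (one has to control the action on the centraliser $Z_{G_v}(G_e)$, not merely on $G_e$, and one has to check that $\Out^1$ does not permute edges nontrivially). The paper sidesteps all of this by invoking Levitt's theorem that the short exact sequence $1\to\mathcal{T}\to\Out^1(G)\to\prod_i\MCG^0_T(\Sigma_i)\to 1$ is in fact a \emph{central} extension (so the action on $\mathcal{T}$ is trivial, not merely finite). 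If you replace your $\pm 1$ heuristic with a citation to Levitt's centrality result, your argument closes; without it, the last step of your proof is incomplete. Your diagnosis of why the hyperbolic (as opposed to toral relatively hyperbolic) hypothesis is needed — namely that $\GL_n(\Z)$-factors would introduce polycyclic-but-not-nilpotent subgroups — matches the paper's reasoning and is correct.
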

	\begin{proof}
		The group $G$ is hyperbolic relative $\P = \varnothing$.
		Hence, by \Cref{GuirardelLevitt:one-ended_rel-hyp_exact-sequence} there is a finite index subgroup $Out^1(G)$ of $\Out(G)$ which fits into a short exact sequence: 
		\begin{center}
			\begin{tikzcd}
				1 \arrow[r] & 
				\mathcal{T} \arrow[r] & 
				\Out^1(G) \arrow[r] & 
				\prod\limits_{i = 1}^p \MCG^0_T(\Sigma_i) \arrow[r] & 
				1
			\end{tikzcd}
		\end{center}
		Moreover, by \cite{Levitt}, this sequence is a central extension. 
		
		Now let $H$ be a finitely generated subgroup of $\Out^1(G)$. 
		Let $\bar{H}$ be its image in the quotient $\prod_{i=1}^p \MCG^0_T(\Sigma_i)$. 
		As in the proof of \Cref{cor:oneendedrelhyp}, we have that $\bar{H} \in \lawFree_N$ where $N$ is bounded in terms of the orbifolds $\Sigma_i$ by \Cref{MCG:quantitative_alternative}. 
		If it contains a short free subgroup we can lift this to $H$ by \Cref{shortfreeinquotient} and we are done. 
		Thus we can assume $\bar{H}$ is virtually abelian. 
		
		This reduces us to the case that $H$ fits into a short exact sequence
		\begin{center}
			\begin{tikzcd}
				1 \arrow[r] & 
				T' \arrow[r] & 
				H \arrow[r] & 
				A \arrow[r] & 
				1
			\end{tikzcd}
		\end{center}
		where $T'$ is the free abelian group $H\cap \mathcal{T}$ and $A$ is a virtually abelian subgroup of the quotient $\prod_{i=1}^p \MCG^0_T(\Sigma_i)$. 
		Thus, we see that $H$ is a central extension of a virtually abelian group and hence is virtually nilpotent. 
		
		Thus, we can conclude that $\Out^1(G)$ has locally uniform exponential growth. 
		Also we know from \Cref{cor:oneendedrelhyp} that $\Out(G)$ satisfies the quantitative free subgroup--law alternative. 
		Thus we can appeal to \Cref{Aut from Out: hyperbolic groups} to complete the proof. 
	\end{proof}

	Thus far, we have only used \Cref{thm:strongestTits} in the capacity of showing that extensions of free abelian groups satisfy a quantitative law alternative.  The statement is, however, significantly more general than this.  Indeed we will demonstrate how \Cref{RAAG_or_RFree_extensions} is enough for us to show that autmorphism groups of certain right-angled artin groups satisfy a quantitative law alternative.

	\begin{theorem}
		\label{AutRAAGalternative}
		Let $A_\Gamma$ be the right-angled Artin group associated to a graph, $\Gamma$ without a separating intersection of links (SIL).  
		The automorphsim group $\Aut(A_\Gamma) \in \lawUEG$, that is, it satisfies the quantitative UEG--law alternative. 
	\end{theorem}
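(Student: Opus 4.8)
The plan is to reduce first to the outer automorphism group and then to exploit a structural decomposition of $\Out(A_\Gamma)$ that is available precisely because $\Gamma$ has no SIL.

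\emph{Step 1 (reduction to $\Out$).} The center $Z(A_\Gamma)$ is the sub-RAAG on the vertices adjacent to every other vertex, and $A_\Gamma$ splits as the direct product of $Z(A_\Gamma)$ with the sub-RAAG on the remaining vertices; hence $\Inn(A_\Gamma)\cong A_\Gamma/Z(A_\Gamma)$ is itself a right-angled Artin group. Applying \Cref{RAAG_or_RFree_extensions} to $1\to\Inn(A_\Gamma)\to\Aut(A_\Gamma)\to\Out(A_\Gamma)\to 1$, it suffices to prove $\Out(A_\Gamma)\in\lawUEG$. By \Cref{prop:commensurabilityinvariant} I am moreover free to replace $\Out(A_\Gamma)$ by any finite-index subgroup, so I would work with the ``pure''/special subgroup $\Out^{0}(A_\Gamma)$ generated by transvections and partial conjugations.

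\emph{Step 2 (structural input).} If $|V(\Gamma)|\le 1$ then $\Out(A_\Gamma)$ is finite and there is nothing to do, and if $\Gamma$ is complete then $\Out(A_\Gamma)=\GL(|V(\Gamma)|,\Z)\le\GL(|V(\Gamma)|,\overline{\Q})$ lies in $\lawFree_N\subseteq\lawUEG$ by \Cref{Linear:quantitative_alternative}. For the general case I would invoke the inductive decomposition of $\Out(A_\Gamma)$ of Charney and Vogtmann (the machinery underlying \cite[Theorem~5.5]{CharneyVogtmann:AutRAAG_ResFin+TitsAlternative}), together with the fact that the no-SIL hypothesis keeps the partial-conjugation automorphisms tame (they generate a virtually abelian subgroup of $\Out(A_\Gamma)$). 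Concretely, after passing to a finite-index subgroup one obtains a short exact sequence
\begin{center}
\begin{tikzcd}
1 \arrow[r] & K \arrow[r] & \Out^{0}(A_\Gamma) \arrow[r] & \prod_i Q_i \arrow[r] & 1
\end{tikzcd}
\end{center}
in which $K$ — assembled from transvection and partial-conjugation subgroups — is finitely generated and virtually nilpotent (or, more favorably, free abelian, hence it satisfies the largeness alternative and has $\vcd(K)=\rk(K)<\infty$), and each $Q_i$ is either a $\GL(n_i,\Z)$ or the outer automorphism group $\Out(A_{\Gamma_i})$ of a strictly smaller SIL-free right-angled Artin group.

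\emph{Step 3 (assembling the classes).} The class $\lawUEG_N$ is closed under finite direct products: a finitely generated subgroup $H$ of a product surjects onto its image in each factor, so if any of those images has uniform exponential growth then so does $H$, with control on the rate, by \Cref{uniformexpgrowthquotient}; and if every such image satisfies a law $w_i$, then $H$ satisfies an iterated commutator law $[w_1(x_1,\dotsc),w_2(y_1,\dotsc)]$-style, since elements of the form $w_i(\dotsc)$ lie in pairwise-commuting coordinate subgroups. Combining this with \Cref{Linear:quantitative_alternative} and induction on $|V(\Gamma)|$, the quotient $\prod_i Q_i$ lies in $\lawUEG$. Then \Cref{thm:strongestTits} (when $K$ is free abelian) or \Cref{law_alternative:nilpotentkernel} (when $K$ is only virtually nilpotent) promotes this to $\Out^{0}(A_\Gamma)\in\lawUEG$, and \Cref{prop:commensurabilityinvariant} gives $\Out(A_\Gamma)\in\lawUEG$; feeding this back through Step~1 via \Cref{RAAG_or_RFree_extensions} yields $\Aut(A_\Gamma)\in\lawUEG$.

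\emph{Main obstacle.} The crux is Step~2: extracting a decomposition of the above shape and checking that the no-SIL hypothesis keeps every piece tame — this is exactly where the hypothesis is needed, since without it the transvection-and-partial-conjugation kernel can fail to be virtually nilpotent and the analysis collapses onto the open problem in \Cref{OutFn_law_alternative?}. One must also verify that the right-angled Artin subgroups $A_{\Gamma_i}$ in the quotient are again SIL-free (or otherwise run the induction along a quantity along which the decomposition genuinely descends), and track the constants and law lengths through each application of \Cref{law-by-law}, \Cref{thm:strongestTits}, \Cref{law_alternative:nilpotentkernel}, and \Cref{prop:commensurabilityinvariant} so that the final constant depends only on $\Gamma$.
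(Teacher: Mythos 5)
Your Step~1 --- reducing to $\Out(A_\Gamma)$ by noting that $\Inn(A_\Gamma)$ is again a right-angled Artin group and invoking \Cref{RAAG_or_RFree_extensions}, and reserving the right to pass to finite-index subgroups via \Cref{prop:commensurabilityinvariant} --- is exactly the paper's opening move. The substance sits in your Step~2, and there you correctly flag the need for a structural decomposition of $\Out(A_\Gamma)$ as the crux, but you do not actually have one: you gesture at Charney--Vogtmann machinery and an inductive decomposition whose quotient pieces would include smaller $\Out(A_{\Gamma_i})$'s. The paper instead cites a theorem of Guirardel and Sale \cite[Theorem~2(2)]{GuirardelSale:AutRAAG}: when $\Gamma$ has no SIL there is a finite-index subgroup $\ccO \leq \Out(A_\Gamma)$ fitting into a short exact sequence
\[
1 \to N \to \ccO \stackrel{\vphi}{\to} \prod_{i=1}^{k} SL(n_i,\Z) \to 1
\]
with $N$ finitely generated and virtually nilpotent. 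This is precisely where the no-SIL hypothesis does its work, and it is sharper than your guessed decomposition: the quotient is a single finite product of arithmetic groups, so there is no induction on $|V(\Gamma)|$ to run and no need to check that the induced graphs $\Gamma_i$ remain SIL-free.

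Given this sequence, your Step~3 also becomes more elementary than you sketch. The quotient $\prod_i SL(n_i,\Z)$ lies in $\lawFree_N$ by \Cref{Linear:quantitative_alternative}, so for a finitely generated $G \leq \ccO$ either $\vphi(G)$ contains a short free subgroup, which lifts to $G$ by \Cref{shortfreeinquotient}, or $\vphi(G)$ satisfies a law, in which case $G$ is (subgroup of $N$)-by-law and therefore satisfies a law by \Cref{law-by-law} alone; neither \Cref{thm:strongestTits} nor \Cref{law_alternative:nilpotentkernel} is required at this stage. So your architecture is right and your self-assessment is accurate: the genuinely missing ingredient is the Guirardel--Sale structure theorem, and without it the proposal does not close.
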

	
	\begin{proof}
		We proceed as in the proof for automorphisms of one-ended hyperbolic groups. \Cref{oneEndedHyperbolic_lawAlternative}
		
		The inner automorphism group of a RAAG is again a RAAG. To see this note that a RAAG has non-trivial centre if and only if the defining graph decomposes as a join of a complete graph (corresponding to the centre) and another graph $\mathcal{G}$. 
		The inner automorphism group is then the RAAG corrsponding to $\mathcal{G}$. 
		From the short exact sequence relating the outer, inner, and full automorphism groups and \Cref{RAAG_or_RFree_extensions} we see that is sufficies to show that $\Out(A_\Gamma)$ satisfies the desired alternative.  
		
		By a result of Guirardel and Sale \cite[Theorem~2(2)]{GuirardelSale:AutRAAG}, there exists a finite index subgroup $\ccO \leq \Out(A_\Gamma)$ fitting into an exact sequence of the form
		
		\begin{center}
			\begin{tikzcd}
				1 \arrow[r] & N \arrow[r] & \ccO \arrow[r, "\vphi"] & \prod\limits_{i = 1}^k SL(n_i, \Z) \arrow[r] & 1
			\end{tikzcd}
		\end{center}
		
		where $N$ is virtually nilpotent. 
		Let $G$ be any finitely generated subgroup of $\ccO$. 
		The product $\prod\limits_{i = 1}^k SL(n_i, \Z) \in \lawFree_N$ where $N$ depends on the $n_i$ by \Cref{Linear:quantitative_alternative}.
		If the quotient $\vphi(G)$ contains a short free subgroups, then we can lift it to $G$ by \Cref{shortfreeinquotient}.
		Otherwise, $\vphi(G)$ satisfies a law.  
		Hence, $G$ is a nilpotent-by-law group and therefore satisfies a law by \Cref{law-by-law}.  
	\end{proof}

	\section{Acknowledgements}
	
	The first author was funded by the Deutsche Forschungsgemeinschaft (DFG, German Research Foundation) under Germany's Excellence Strategy EXC 2044--390685587, Mathematics M\"unster: Dynamics--Geometry--Structure.
	The second author acknowledges support from U.S. National Science Foundation grants DMS
	1107452, 1107263, 1107367 "RNMS: Geometric Structures and Representation Varieties" (the
	GEAR Network).
	The third author acknowledges funding from ISF grant \# 660/20, the Temple University Graduate School, and U.S. National Science Foundation DMS–1907708.

	\bibliographystyle{alpha}
	\bibliography{ResearchBib}

\end{document}